\documentclass[A4paper,11pt]{article}
\usepackage{amsfonts}
\usepackage{amsmath,amsthm}
\usepackage{amssymb,amscd}
\usepackage{blkarray}
\usepackage{enumerate}

\usepackage{kbordermatrix}
\usepackage{color}
\usepackage{cite}
\usepackage{physics}
\usepackage{esint}
\usepackage{tikz}
\usepackage{tikzscale}
\usepackage[absolute,overlay]{textpos}
\usepackage{multicol}
\usepackage{framed}
\usepackage{lscape}
\usepackage{caption}
\usepackage{subcaption}
\usepackage[inline]{enumitem}
\usepackage{float}
\usepackage{lipsum}
\usepackage{titlesec}
\usepackage{mathtools}

\titleformat{\paragraph}
{\normalfont\normalsize\bfseries}{\theparagraph}{1em}{}
\titlespacing*{\paragraph}
{0pt}{3.25ex plus 1ex minus .2ex}{1.5ex plus .2ex}
\renewcommand{\kbldelim}{(}
\renewcommand{\kbrdelim}{)}
\makeatletter

\newcommand{\Rmnum}[1]{\expandafter\@slowromancap\romannumeral #1@}
\makeatother

\def\XXint#1#2#3{{\setbox0=\hbox{$#1{#2#3}{\int}$}
\vcenter{\hbox{$#2#3$}}\kern-.5\wd0}}

\def\XXint#1#2#3{{\setbox0=\hbox{$#1{#2#3}{\int}$}
\vcenter{\hbox{$#2#3$}}\kern-.5\wd0}}

\theoremstyle{remark}

\theoremstyle{definition}
\newtheorem{theorem}{Theorem}[section]

\newtheorem{corollary}[theorem]{Corollary}
\newtheorem{lemma}[theorem]{Lemma}

\newtheorem{remark}{Remark}[section]

\newtheorem{proposition}[theorem]{Proposition}

\begin{document}
\title{Integrability of Multispecies Long-Range Swap Models with Species-Dependent Interpolation}
\author{\textbf{Eunghyun Lee\footnote{eunghyun.lee@nu.edu.kz}}\\ {\text{Department of Mathematics,}}
                                         \date{}   \\ {\text{School of Sciences and Humanities,}} \\ {\text{Nazarbayev University, }}\\ {\text{53 Kabanbay Batyr, Astana, Kazakhstan }}   }

\date{}
\maketitle
\begin{abstract}
\noindent
We introduce a class of multispecies exclusion processes with long-range swap interactions, incorporating species-dependent interpolation between TASEP-type and drop--push-type dynamics: each species $i$ is assigned a parameter $\mu_i$ governing same-species interactions, resulting in a heterogeneous system in which different species follow distinct microscopic interaction mechanisms. In contrast to previously studied integrable multispecies models, where species dependence typically enters through jump rates, the present framework allows the interaction mechanism itself to depend on the species. Our main result establishes integrability of the model in the binary parameter regime $\mu_i \in \{0,1\}$ for arbitrary species compositions. In the continuous parameter regime $\mu_i \in (0,1)$, we identify several nontrivial classes of species compositions for which integrability is preserved. We further extend the model to include bidirectional motion, going beyond totally asymmetric dynamics. Using the coordinate Bethe ansatz, we prove two-particle reducibility and derive the associated scattering matrix, which is shown to satisfy the Yang--Baxter equation. The resulting scattering matrix exhibits genuinely species-dependent diagonal entries.
\end{abstract}

\section{Introduction}\label{747pm819}

Integrable multispecies extensions of exactly solvable stochastic particle models have been studied extensively in recent years \cite{AritaKunibaSakaiSawabe2009,FerrariMartin2007,Kuan2020,Lee-Raimbekov-2025,Tracy-Widom-2013}. Such extensions often reveal richer structures and deeper connections to combinatorics, representation theory, probability, and statistical mechanics \cite{AyyerMandelshtamMartin2024,CorteelMandelshtamWilliams2022,Mandelshtam2025}. However, integrability in such systems is typically fragile: even seemingly mild modifications of local dynamics can destroy exact solvability.

In a recent work \cite{Lee2026}, we introduced the multispecies totally asymmetric simple exclusion process (TASEP) with long-range swap interactions. In this model, particles attempt to jump to the right and interact with particles along their path according to their relative strength: a jumping particle exchanges position with the first weaker particle encountered to its right (with holes regarded as the weakest), while stronger particles are bypassed. Although this induces effective long-range behavior, the dynamics can be implemented through local update rules. The model was shown to be integrable via the coordinate Bethe ansatz, with the associated scattering matrix satisfying the Yang--Baxter equation.

We adopt the same notion of integrability as in \cite{Lee2026}. A stochastic interacting particle system is said to be integrable if its $n$-particle dynamics can be reduced to two-particle interactions, and the associated scattering matrices satisfy the Yang--Baxter equation. This structure enables the application of the coordinate Bethe ansatz, yielding exact formulas for transition probabilities and providing a powerful framework for asymptotic analysis. See \cite{Lee-2020,NagaoSasamoto2004,Schutz-1997,Tracy-Widom-2008,TracyWidom2009} for results on the asymmetric simple exclusion process (ASEP) and related stochastic particle systems.

Many integrable multispecies particle models arise as degenerations of colored stochastic vertex models \cite{Aggarwal2017,AggarwalNicolettiPetrov2025,Ayyer-Kuniba-2025,Borodin-Bufetov,BorodinWheeler2022,Zhong2024}. However, it remains unclear whether the multispecies long-range swap model considered here fits into this framework. While it is natural to expect a connection to a degeneration of a colored six-vertex model, no such realization is currently known.

An important aspect of the definition of the multispecies long-range swap model in \cite{Lee2026} concerns the interpretation of “weaker” and “stronger” in the case of same-species encounters. When a particle attempts to jump into a site occupied by another particle of the same species, there are two natural conventions: the incoming particle may treat the resident particle either as weaker or as stronger. In the previous work \cite{Lee2026}, integrability was established under the convention that the incoming particle treats the resident particle as \emph{stronger}. This case was referred to as the drop-push-type multispecies long-range swap model. It was also observed that the alternative convention can be treated in a similar manner, and was called the TASEP-type multispecies long-range swap model \cite{Lee2026}.

Since both the TASEP-type and drop--push-type multispecies long-range swap models are known to be integrable \cite{Lee2026}, it is natural to ask whether an interpolation between these two classes remains integrable. The main contribution of this paper is to address this question in a substantially more general setting.

More precisely, we introduce a multispecies long-range swap model in which
each species is assigned its own interpolation parameter governing same-species
interactions. In contrast to previously studied integrable multispecies models,
where species dependence typically enters through jump rates
\cite{BorodinFerrariSasamoto2009,Lee-2021} while the underlying interaction
rules remain uniform across species, the present framework allows the
microscopic interaction mechanism itself to depend on the species. This leads
to a genuinely heterogeneous system in which different species follow distinct
local dynamics. 

A key novelty of the present work is that integrability persists in the species-dependent binary parameter setting $\mu_i \in \{0,1\}$ for arbitrary species compositions. Moreover, in the continuous parameter regime $\mu_i \in (0,1)$, we establish integrability for several nontrivial classes of species compositions. This demonstrates that integrability can be retained even when the interaction mechanism varies across species, while still admitting a Yang--Baxter structure.

In addition, we show that these integrability results extend to a bidirectional setting, allowing particles to move in both directions, a feature not present in \cite{Lee2026}.

\subsection*{Organization of the paper}

The remainder of the paper is organized as follows. In Section~\ref{definitionofmodel}, we introduce the model, and in Section~\ref{mainresults}, we state the main results.
Section~\ref{Section3} is devoted to establishing integrability of the model via the coordinate Bethe ansatz. We first present the two-particle interaction matrices and the conditions for the bidirectional generalization in Section~\ref{TwoParticle}. In Section~\ref{408ama}, we show that three-particle interactions can be reduced to two-particle interactions and describe the corresponding particle dynamics. Section~\ref{340pm412} extends this analysis to general $n$-particle systems, which constitutes a nontrivial generalization beyond the three-particle case. In Section~\ref{342pm412}, we derive the effective swap rates, and in Section~\ref{343pm412}, we verify that the associated scattering matrix satisfies the Yang--Baxter equation.
Finally, Section~\ref{Section4} concludes with a discussion of future research directions.

\section{Definition of Model and Main Results}\label{section2}

First, we fix the notation used throughout the paper. Our conventions follow those of \cite{Lee2026}.

\subsection{Configurations and notations}\label{subsection21}

We consider a system of $n$ particles on $\mathbb{Z}$ with species labels in $\{1,\dots,N\}$. A configuration of the system is represented by a pair
\[
(X,\pi) = (x_1,\dots,x_n;\, \pi_1\cdots \pi_n),
\]
where $X=(x_1,\dots,x_n)\in \mathbb{Z}^n$ satisfies $x_1<\cdots<x_n$ and represents the particle positions, and $\pi=\pi_1\cdots \pi_n \in \{1,\dots,N\}^n$ is a word encoding the species of the particles from left to right.

For an initial configuration $(Y,\nu)$ and a terminal configuration $(X,\pi)$ at time $t$, we denote the transition probability by
\[
P_{(Y,\nu)}(X,\pi; t).
\]
For fixed $X$ and $Y$, we define $\mathbf{P}_Y(X;t)$ to be the $N^n \times N^n$ matrix whose $(\pi,\nu)$-entry is $P_{(Y,\nu)}(X,\pi; t)$, where rows and columns are indexed by words $\pi,\nu \in \{1,\dots,N\}^n$, respectively.

When no confusion arises, we omit subscripts and simply write $P(X,\pi; t)$ and $\mathbf{P}(X;t)$. Moreover,
\[
\left(\frac{d}{dt}\mathbf{P}(X;t)\right)_{\pi,\nu}
:= \frac{d}{dt} P_{(Y,\nu)}(X,\pi; t).
\]

Throughout the paper,  $\mathbf{I}$ denotes the identity matrix, with the dimension understood from context.
\subsection{Model and Dynamics}\label{definitionofmodel}

We begin by recalling the model in the totally asymmetric setting introduced in \cite{Lee2026}.

\subsubsection{Multispecies long-range swap dynamics in \cite{Lee2026}}

Each particle is equipped with an exponential clock of rate $1$. When its clock rings, the particle attempts to jump to the neighboring site to its right.

When a particle of species $i$ at site $x$ attempts to move to site $x+1$, which is occupied by a particle of species $j$, the interaction is governed by the following rules:
\begin{itemize}
\item[(i)] $i<j$: The incoming particle $i$ jumps over the particle $j$ at $x+1$ and moves to $x+2$.
\item[(ii)] $i>j$: The incoming particle $i$ and the resident particle $j$ exchange their positions.
\item[(iii)] $i=j$: There are two possible conventions:
\begin{itemize}
  \item[$\bullet$] (TASEP type) The incoming particle $i$ and the resident particle $j$ exchange their positions, resulting in no change in the configuration.
  \item[$\bullet$] (drop--push type) The incoming particle $i$ jumps over the particle $j$ at $x+1$ and moves to $x+2$.
\end{itemize}
\end{itemize}
If the incoming particle lands on a site that remains occupied, the same rule is applied recursively. Likewise, if a resident particle is displaced to an already occupied site by the incoming particle, the same ``jumping over'' mechanism is applied recursively. This sequence of local two-particle interactions, resulting in an effective long-range swap, can be formalized using the hidden-state formulation.
\begin{remark}[\cite{Lee-2024,Lee2026}]
A hidden state is a \textit{temporary} configuration in which two particles occupy the same site, denoted by $(x,x,ij)$ introduced to describe two-particle interaction. These states do not belong to the state space of the Markov process and are resolved instantaneously according to the interaction rules. Within this framework, any effective long-range swap can be decomposed into a sequence of consecutive two-particle interactions occurring at hidden states; see Example 2.1 of \cite{Lee2026} for an illustration.
\end{remark}

\subsubsection{Interpolation between TASEP-type and drop--push-type dynamics}

We introduce an interpolation between the TASEP-type and drop--push-type swap dynamics, in which each species is assigned its own interpolation parameter, leading to species-dependent interaction rules.

Each particle of species $i$ at site $x$ is equipped with an exponential clock of rate $1$, and when its clock rings, it attempts to move to the site $x+1$.

If the target site is occupied by a particle of species $j$, the interaction is governed by the following rules:
\begin{itemize}
\item[(i)] If $j \neq i$, then the same swap rule as in \cite{Lee2026} is applied.

\item[(ii)] If $j = i$, then the interaction is resolved probabilistically as follows:
\begin{itemize}
\item[$\bullet$] with probability $\mu_i$, the incoming particle jumps over the resident particle and moves to $x+2$;
\item[$\bullet$] with probability $\lambda_i = 1 - \mu_i$, the two particles exchange their positions.
\end{itemize}
\end{itemize}
If a particle is displaced to a neighboring site that is still occupied, the same interaction rule is applied recursively, with the displaced particle treated as the new active particle. In particular,  when such a displacement induces motion to the left and the neighboring site is occupied by a particle of the same species, the interaction is resolved with the probabilities reversed: with probability $\lambda_i = 1 - \mu_i$, the active particle jumps over the resident particle to the left, and with probability $\mu_i$, the two particles exchange their positions.

This choice of probabilities is consistent with both the interpolation structure of the model and the interpretation of the diagonal entries in (3) of \cite{Lee2026}.
More precisely, in the two-particle case, the $(11,11)$ entry (equal to 1) of the first matrix in (3) of \cite{Lee2026} represents the probability that a particle of species $1$ jumps over another particle of the same species when moving to the right. By symmetry, this is also the probability of exchanging positions under leftward motion.
Conversely, the $(11,11)$ entry of the second matrix represents the probability of exchanging positions under rightward motion, and, correspondingly, the probability of jumping over under leftward motion.

The cases $\mu_i = 0$ and $\mu_i = 1$ for all species $i$ correspond to the TASEP-type and drop--push-type dynamics, respectively, studied in \cite{Lee2026}.

\begin{figure}[H]
\centering
\begin{tikzpicture}[scale=1.0, every node/.style={font=\small}]

    \draw[thick] (0,0) -- (6,0);
    \foreach \x in {0,1,2,3,4,5,6}
        \draw (\x,0.08) -- (\x,-0.08);

    \node at (1,0.45) {$0$};
    \node at (2,0.45) {$i$};
    \node at (3,0.45) {$i$};
    \node at (4,0.45) {$0$};
    \node at (5,0.45) {$0$};

    \node at (3,-0.9) {same-species encounter};

    \draw[->,thick]
        (2,0.7) to[out=80,in=100] (3,0.7);

    \node at (2.5,1.3) {jump attempt};

    \fill (6.4,0) circle (1.2pt);

    \draw[->,thick] (6.4,0) -- (9.8,1.9);
    \draw[->,thick] (6.4,0) -- (9.8,-1.9);

    \node at (8.0,2.5) {$\mu_i$};
    \node at (8.0,-2.5) {$1-\mu_i$};

    \draw[thick] (10.2,1.9) -- (16.2,1.9);
    \foreach \x in {10.2,11.2,12.2,13.2,14.2,15.2,16.2}
        \draw (\x,1.98) -- (\x,1.82);

    \node at (11.2,2.35) {$0$};
    \node at (12.2,2.35) {$0$};
    \node at (13.2,2.35) {$i$};
    \node at (14.2,2.35) {$i$};
    \node at (15.2,2.35) {$0$};

    \node at (13.2,1.0) {jump over};

    \draw[thick] (10.2,-1.9) -- (16.2,-1.9);
    \foreach \x in {10.2,11.2,12.2,13.2,14.2,15.2,16.2}
        \draw (\x,-1.82) -- (\x,-1.98);

    \node at (11.2,-1.45) {$0$};
    \node at (12.2,-1.45) {$i$};
    \node at (13.2,-1.45) {$i$};
    \node at (14.2,-1.45) {$0$};
    \node at (15.2,-1.45) {$0$};

    \node at (13.2,-2.7) {no change};

\end{tikzpicture}
\caption{Same-species interaction. In the right-moving setting, with probability $\mu_i$, the active particle jumps over the adjacent particle of the same species, resulting in a shift of the configuration. With probability $1-\mu_i$, the two particles exchange positions.}
\label{fig:same-species}
\end{figure}
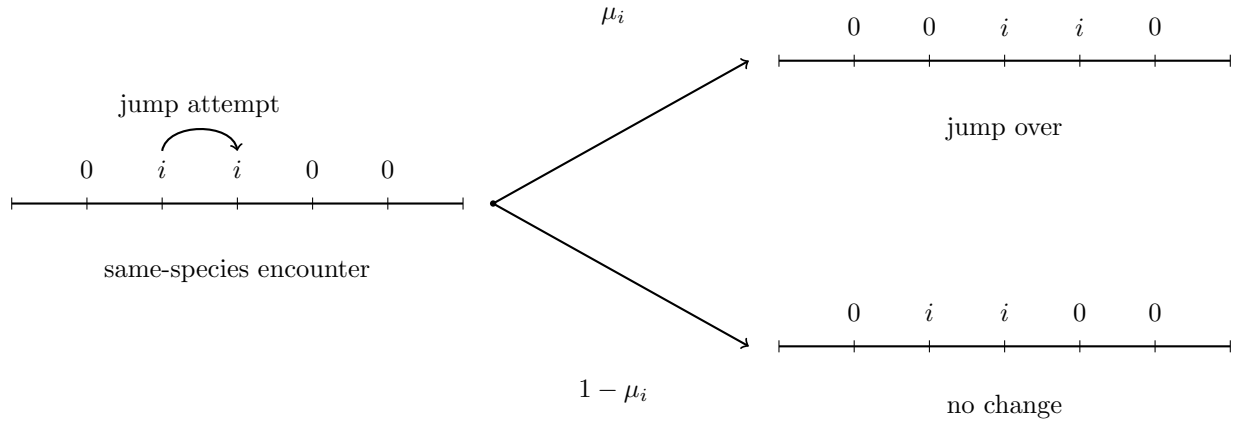

\subsubsection{Bidirectional generalization}\label{subsubsection11}

We further extend the model in \cite{Lee2026} by allowing particles to move in both directions.

Each particle of species $i$ is equipped with two independent exponential clocks: a rightward clock with rate $p$ and a leftward clock with rate $q = 1 - p$. When the rightward clock rings, the particle attempts to move from site $x$ to $x+1$; when the leftward clock rings, it attempts to move from $x$ to $x-1$.

The interaction rules for rightward moves are as defined in the previous subsection. For leftward moves, the rules are defined analogously, with the direction of motion reversed and the roles of $\mu_i$ and $\lambda_i$ interchanged. Specifically, when a particle of species $i$ at site $x$ attempts to move to $x-1$ and the target site is occupied by a particle of species $j$, the same swap rule applies if $j \neq i$. If $j = i$, the interaction is resolved with probability $\lambda_i$ by jumping over the resident particle, and with probability $\mu_i$ by exchanging positions. Repeated application of these local rules produces effective long-range swaps in both directions. It is shown that this choice of asymmetric probabilities enables the model to be integrable.

\subsection{Main results}\label{mainresults}

We summarize the main results for the bidirectional multispecies long-range swap model with species-dependent interpolation parameters $\mu_i$ defined above. These results establish that the Yang--Baxter structure is robust under species-dependent interaction mechanisms and that integrability persists in a broad class of cases.

Establishing integrability via the coordinate Bethe ansatz requires two ingredients:
(i) the reduction of the many-body dynamics to two-particle interactions, and
(ii) the Yang--Baxter equation for the associated two-particle scattering matrices.

In the present model, the Yang--Baxter equation is shown to hold for arbitrary species compositions and parameters $\mu_i \in [0,1]$. The reduction to two-particle interactions is governed by the invertibility of certain operators arising in the reduction procedure.

\begin{theorem}[Yang--Baxter equation]\label{356pam}
For an $n$-particle system with arbitrary species composition and species-dependent parameters $\mu_i \in [0,1]$, the associated two-particle scattering matrices satisfy the Yang--Baxter equation.
\end{theorem}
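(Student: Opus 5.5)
We first need the explicit two-particle scattering matrix, which we derive by the coordinate Bethe ansatz before verifying the Yang--Baxter equation. For a pair of species $(i,j)$ we write the two-particle master equation for $\mathbf{P}(x_1,x_2;t)$ on the physical region $x_1<x_2$. We extend the free evolution to $x_1=x_2$ through the hidden states $(x,x,ij)$, which encode the local rules (i)--(iii) with weights $\mu_i,\lambda_i$ for rightward moves and the reversed weights for leftward moves. Matching the boundary terms then yields a boundary condition of the form
\[
\mathbf{P}(x,x;t)=\mathbf{B}(\xi_1,\xi_2)\,\mathbf{P}(x,x+1;t)+\mathbf{C}(\xi_1,\xi_2)\,\mathbf{P}(x-1,x;t).
\]
Here $\mathbf{B},\mathbf{C}$ are $N^2\times N^2$ matrices depending on $p,q$ and the $\mu$'s. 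Inserting the Bethe ansatz $\sum_\sigma \mathbf{A}_\sigma \prod_k \xi_{\sigma(k)}^{x_k}e^{\varepsilon(\xi_k)t}$ determines the scattering matrix $\mathbf{S}_{\beta\alpha}=\mathbf{S}(\xi_\beta,\xi_\alpha)$ on $\mathbb{C}^N\otimes\mathbb{C}^N$.

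The next step is to read off the structure of $\mathbf{S}$. Since $i<j$ and $i>j$ interactions do not involve $\mu$, $\mathbf{S}$ decomposes into blocks. The $2\times2$ block on $\mathrm{span}\{e_i\otimes e_j,e_j\otimes e_i\}$ for $i\neq j$ has exactly the form found in \cite{Lee2026}, independent of all $\mu$'s. The $1\times1$ diagonal block on $e_i\otimes e_i$ is a scalar function $S_{ii}(\xi_\beta,\xi_\alpha;\mu_i)$, a single-species two-particle scattering amplitude. The species dependence enters only here. I would record that $S_{ii}$ is the ASEP/drop--push-type amplitude interpolated in $\mu_i$; the asymmetric choice $\lambda_i\leftrightarrow\mu_i$ for leftward moves is what keeps $S_{ii}$ a ratio of the form $-f_i(\xi_\alpha,\xi_\beta)/f_i(\xi_\beta,\xi_\alpha)$.

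For the Yang--Baxter equation $\mathbf{S}_{12}\mathbf{S}_{13}\mathbf{S}_{23}=\mathbf{S}_{23}\mathbf{S}_{13}\mathbf{S}_{12}$ on $(\mathbb{C}^N)^{\otimes3}$, each factor preserves the multiset of species, so it suffices to check each weight sector $\{i,j,k\}$ separately. We split into cases according to this multiset.
\begin{itemize}
\item[(a)] All three species distinct. No diagonal entries appear, and the identity reduces to the one proved in \cite{Lee2026}.
\item[(b)] All three species equal. All operators are scalars, which commute, so the identity is trivial.
\item[(c)] The multiset is $\{i,i,j\}$ with $i\neq j$. This is a three-dimensional sector spanned by $iij,iji,jii$, and it is the genuinely new case.
\end{itemize}
In case (c), each $\mathbf{S}_{ab}$ acts as a direct sum of a $2\times2$ mixed block, independent of $\mu$, and a scalar $S_{ii}$ on the remaining basis vector. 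The scalar involves only $\mu_i$, never $\mu_j$.

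The main obstacle is case (c). There we must check nine scalar identities in which $S_{ii}$ appears at different spectral-parameter pairs on the two sides. The plan is to write the $2\times2$ block as $\begin{pmatrix} a & b\\ c& d\end{pmatrix}$ in explicit $\xi$-form and multiply out both sides. The entries not involving $S_{ii}$ cancel as in \cite{Lee2026}. The entries containing $S_{ii}$ reduce to a relation of the form
\[
S_{ii}(\xi_1,\xi_2)\,X(\xi_1,\xi_3,\xi_2)=X'(\ldots)\,S_{ii}(\xi_1,\xi_2)+\cdots.
\]
I expect these to follow from the fact that the mixed block's off-diagonal entries $b,c$ satisfy a relation making the sector behave like a two-species (Hecke-type) $R$-matrix, in which the scalar on the symmetric state is a free parameter. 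Concretely, the standard fact is that in the $\{i,i,j\}$ sector the Yang--Baxter equation holds for any diagonal function, provided the $2\times2$ block satisfies $a(\xi_1,\xi_2)a(\xi_2,\xi_1)+b\,c=1$-type unitarity together with the $\{i,j,k\}$ identity already known. If direct verification fails to organize this way, the fallback is a symbolic computation of the nine entries, using the explicit rational form of $S_{ii}$ in $\mu_i$.
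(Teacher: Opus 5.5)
\textbf{Case (c) is not actually proved.} Your sector decomposition matches the paper's proof, which likewise only asserts a direct check on these blocks: $[i,i,i]$ is trivial, $[i,j,k]$ reduces to \cite{Lee2026}, and $[i,i,j]$ is the new case. However, the ``standard fact'' you rely on in case (c) is false as stated.

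For six-vertex (Hecke-type) $R$-matrices with a nonzero reflection amplitude, i.e.\ a nonzero $(ij,ij)$ entry in the braid form, the Yang--Baxter equation on the $\{i,i,j\}$ sector is precisely what ties the $|ii\rangle$ weight to the mixed block. You can change that weight while keeping unitarity of the mixed block and the $\{i,j,k\}$ identity intact, and the $\{i,i,j\}$ identity then fails. So those two properties do not imply it. Your expectation that $S_{ii}$ enters the two sides at different spectral-parameter pairs is also wrong for this model. As written, case (c) therefore rests entirely on the fallback computation, and you never write down the matrix to compute with.

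\textbf{The missing observation.} In \eqref{Rmatirx} the mixed block is purely off-diagonal. Hence $\mathbf{R}_{\beta\alpha}=\mathbf{P}\mathbf{D}_{\beta\alpha}$, where $\mathbf{P}$ is the flip $|xy\rangle\mapsto|yx\rangle$ and $\mathbf{D}_{\beta\alpha}$ is diagonal. Its entry on $|xy\rangle$ is $1/\xi_\alpha$ if $x<y$, $\xi_\beta$ if $x>y$, and the $ii$ amplitude if $x=y=i$.

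Each factor in \eqref{505pm} therefore transposes two (species, spectral parameter) pairs and multiplies by a weight that depends only on that ordered pair. Both sides send $|abc\rangle$ to a multiple of $|cba\rangle$. They do so through the same three encounters ($\alpha$ with $\beta$, $\alpha$ with $\gamma$, $\beta$ with $\gamma$), each with the same orientation and between the same species. So the scalars coincide.

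Equivalently, moving the flips to the left turns both sides into $\mathbf{P}_{12}\mathbf{P}_{23}\mathbf{P}_{12}=\mathbf{P}_{23}\mathbf{P}_{12}\mathbf{P}_{23}$ times a product of diagonal matrices, and diagonal matrices commute. This proves the equation in every sector at once, for arbitrary diagonal amplitudes. No property of the $\mu_i$-dependent amplitude is needed: in the sector spanned by $|iij\rangle,|iji\rangle,|jii\rangle$, $S_{ii}$ appears exactly once on each side, at the same spectral pair.

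\textbf{A smaller inconsistency in case (a).} Your reduction to \cite{Lee2026} uses that the mixed block is independent of $p,q$. That is true because the imposed condition \eqref{212am324}, made possible by \eqref{902pm}, involves neither $p,q$ nor spectral parameters. It contradicts your own description of $\mathbf{B},\mathbf{C}$ as depending on $\xi_1,\xi_2,p,q$.
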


\begin{proposition}[Integrability under invertibility]
For multisets $M$ of species labels in the system for which the operators arising in the two-particle reduction procedure are invertible, the system admits a reduction of many-body dynamics to two-particle interactions.
\end{proposition}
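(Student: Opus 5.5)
The plan follows the coordinate Bethe ansatz scheme of \cite{Lee2026}, adapted to the species-dependent rules. First I write the master equation for $\mathbf{P}(X;t)$ on the physical region $x_1<\cdots<x_n$. Next I extend it formally to all of $\mathbb{Z}^n$ as $n$ independent particles: each coordinate $x_k$ contributes the free generator $p\,\mathbf{P}(\dots,x_k-1,\dots)+q\,\mathbf{P}(\dots,x_k+1,\dots)-\mathbf{P}(X)$.

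The interaction is then encoded through boundary conditions at coincident coordinates $x_k=x_{k+1}$, which correspond to the hidden states $(x,x,ij)$. For each adjacent pair these take the form
\[
\mathbf{P}(\dots,x,x,\dots)=\mathbf{B}_{k,k+1}\,\mathbf{P}(\dots,x,x+1,\dots)+\mathbf{C}_{k,k+1}\,\mathbf{P}(\dots,x-1,x,\dots).
\]
Here $\mathbf{B}_{k,k+1}$ and $\mathbf{C}_{k,k+1}$ are the two-particle matrices of Section~\ref{TwoParticle}, acting on the $k$th and $(k+1)$st letters of the word. Their diagonal blocks on $ii$ carry $\mu_i$ and $\lambda_i$, with the reversed roles for leftward motion, and their off-diagonal blocks encode swap versus jump-over for $i\neq j$. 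The first step is to check these two-particle conditions directly from the dynamics for $n=2$, for both rightward and leftward clocks.

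The core step is reduction. Fix a configuration with a maximal cluster of consecutive particles $x_k,x_k+1,\dots,x_{k+m}$ that is hit by a jump. The true generator produces a long-range swap, i.e.\ a chain of hidden-state resolutions. I would show by induction on the cluster length $m$ that successively imposing the two-particle boundary conditions on the extended free equation reproduces exactly this chain. The idea is that each hidden state $(x,x,ij)$ created at step $r$ is resolved by $\mathbf{B}_{r,r+1}$ or $\mathbf{C}_{r,r+1}$, and this displaces an active particle into the next site. The induction step is substitution of one boundary relation into the next.

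Substituting the boundary conditions at several coinciding coordinates does not, however, give an explicit formula. It gives a linear system. For instance, at $x_k=x_{k+1}=x_{k+2}$ the unknown $\mathbf{P}$ appears on both sides through products of $\mathbf{B}$'s and $\mathbf{C}$'s, e.g.\ in the form $(\mathbf{I}-\mathbf{C}_{k,k+1}\mathbf{B}_{k+1,k+2}\cdots)\mathbf{P}=\cdots$. Solving it requires inverting operators of the type $\mathbf{I}-\prod(\cdot)$, restricted to the species multiset $M$ of the cluster, since the matrices preserve species content. Under the hypothesis of the proposition these operators are invertible. The solved boundary values then coincide with the generator's rates for the effective long-range swaps, and Section~\ref{342pm412} computes those rates. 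Finally I would check that the resulting equation agrees with the true master equation on the physical region. This is done by matching, for each outcome word, the coefficient obtained from the nested boundary substitutions against the probability, given as a product of $\mu_i,\lambda_i$ and the indicator swap rules, of the corresponding recursive hidden-state path.

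The main obstacle is this matching for general $n$. Beyond three particles the nested substitutions branch, because same-species encounters introduce probabilistic splitting that can send the active particle back leftward with reversed $\mu/\lambda$. One must show that the inverse operators resum these possibly looping paths into exactly the Markov rates. I would first treat $n=3$ explicitly, as in Section~\ref{408ama}. I would then do the general induction by viewing the inverted operator as a Neumann-series geometric sum over returns of hidden states, and verify termwise agreement with the recursive dynamics.
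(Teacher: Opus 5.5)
Your core mechanism is the paper's: extend to $\mathbb Z^n$ with the free generator, impose the pairwise condition \eqref{bdn}, and eliminate the non-admissible terms by solving a linear system whose solvability is exactly the hypothesis. But you have misidentified that system, and this matters because the hypothesis concerns specific operators. No shift $\mathcal T_i^{\pm}$ ever produces a triple coincidence $x_k=x_{k+1}=x_{k+2}$. Instead, $\mathcal T_i^-$ produces a run $x,x+1,\dots,x+m-1$ that ends in a single repeated pair, as in \eqref{maxblock2}, and \eqref{bdn} slides that pair along the run. The unknowns are therefore $\mathbf W_{m,1},\dots,\mathbf W_{m,m}$. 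They are coupled by the tridiagonal recurrence \eqref{eq:block-recurrence} and closed by the admissible endpoints $\mathbf W_{m,0},\mathbf W_{m,m+1}$. Forward elimination requires the nested Schur complements $\mathfrak A_k=\mathbf I-\mathcal B_{k+1}\mathfrak A_{k-1}^{-1}\mathcal B_k'$ of \eqref{1122am331} to be invertible; these are the operators the hypothesis refers to. From $\mathfrak A_2$ on they are not of the form $\mathbf I-\prod(\cdot)$. For $m=2$ one gets $\mathbf I-\mathbf X$ or $\mathbf I-\mathbf Y$, which in your notation are $\mathbf X=\mathbf C_{k+1,k+2}\mathbf B_{k,k+1}$ and $\mathbf Y=\mathbf B_{k,k+1}\mathbf C_{k+1,k+2}$. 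Your $\mathbf C_{k,k+1}\mathbf B_{k+1,k+2}$ never arises. With the system set up correctly, Lemma~\ref{513ama} gives $\mathbf W_{m,m}$ and back-substitution gives the remaining $\mathbf W_{m,i}$. The $\mathcal T_i^+$ terms are handled by the mirror-image elimination, and that is the whole proof.

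Your final step matches the eliminated equation termwise against hidden-state path probabilities for general $n$, and you flag it as the main obstacle. It is not part of this statement, and the paper does not attempt it. The paper takes \eqref{212am324} as the \emph{defining} boundary condition; this is only a sufficient condition for the relation \eqref{209am324b} that the $n=2$ dynamics forces under the identification \eqref{902pm}. Reducibility is therefore just the elimination above, and rates such as $p\mu_i^2/(1-\mu_i\lambda_i)$ are read off afterwards. Your version of this step would also fail under the stated hypothesis. Resumming returns of hidden states by expanding $\mathfrak A_k^{-1}=\sum_{j\ge0}\mathcal X_k^j$ needs $\rho(\mathcal X_k)<1$, which is strictly stronger than invertibility. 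The paper has this only in the verified regimes: nilpotency of $\mathcal X_k$ in the binary case, and $\rho(\mathcal X_k)<1$ on sectors (a) and (b). Either drop the step, or state it under that extra assumption.
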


\begin{theorem}[Binary parameter regime]\label{binarycase}
Suppose $\mu_i \in \{0,1\}$ for each species $i$. Then the associated operators are invertible for all multisets $M$ of species labels. Consequently, the model is integrable for arbitrary species compositions in this regime.
\end{theorem}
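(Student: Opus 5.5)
Since the operators in the two-particle reduction are not yet written out at this point, the plan relies on the structure they will have by analogy with \cite{Lee2026}. The reduction matrices are triangular up to a permutation reordering, and their diagonal blocks are determined by same-species interactions.

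The first step is to recall the reduction procedure in its generic form. For an $n$-particle configuration with a cluster of particles at consecutive sites, one writes the boundary conditions on the Bethe wave function in terms of the two-particle interaction matrices. The many-body condition then reduces to two-particle ones exactly when a certain operator $\mathbf{A}_M$ is invertible. This operator acts on the span of words whose letters form the multiset $M$, and it is built from products of local matrices $\mathbf{I}+\mathbf{B}_{k,k+1}$ (or their left-moving analogues with $\mu_i,\lambda_i$ swapped) that encode one recursive step of the swap mechanism. Because the species set is partially ordered by strength, we order the words in the $M$-sector by a suitable statistic, for instance inversion number relative to the sorted word. Rules (i) and (ii) for $i\neq j$ only move weaker particles in one direction, so with this ordering the cross-species contributions lie strictly on one side of the diagonal. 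Hence $\mathbf{A}_M$ is block triangular, with diagonal blocks indexed by the runs of equal species.

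The second step reduces invertibility to the diagonal blocks, which come only from same-species encounters. Within a run of $m$ particles of species $i$, the local factor is a scalar expression in $\mu_i$, $\lambda_i$ and the spectral variables, such as $(\mu_i\xi_k+\lambda_i)$ or the analogous combination with $p,q$. For $\mu_i\in\{0,1\}$ exactly one of $\mu_i,\lambda_i$ vanishes. The block then collapses to the pure TASEP-type or pure drop--push-type single-species block, which is invertible by the corresponding computation in \cite{Lee2026}: it is either the identity or a nonzero monomial in the spectral variables. For generic spectral parameters this works for arbitrary $M$, since different species' blocks do not mix on the diagonal. Taking products, $\det\mathbf{A}_M\neq0$. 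The Proposition (Integrability under invertibility) then gives two-particle reducibility, and Theorem~\ref{356pam} gives the Yang--Baxter equation, so the model is integrable.

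The main obstacle is the first step: checking that the mixed-species terms are genuinely strictly triangular once a same-species pair with different conventions sits between particles of other species. In that situation a recursive displacement could in principle return to the same word. I would handle this by induction on $n$, following the $n$-particle reduction scheme of Section~\ref{340pm412}. Peeling off the strongest species, whose particles are never displaced by others, factors $\mathbf{A}_M$ as a triangular product of $\mathbf{A}_{M'}$ with $M'=M\setminus\{\text{strongest}\}$ and a single-species block, so each factor is invertible in the binary regime.
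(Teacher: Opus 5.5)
Your proposal has a genuine gap: it never works with the operators the theorem is about. Those are the constant matrices $\mathfrak A_0=\mathbf I$, $\mathfrak A_k=\mathbf I-\mathcal X_k$ with $\mathcal X_k=\mathcal B_{k+1}\mathfrak A_{k-1}^{-1}\mathcal B_k'$ (for $n=3$, simply $\mathbf I-\mathbf X$ and $\mathbf I-\mathbf Y$). They arise when the non-admissible terms $\mathbf W_{m,i}$ are eliminated from the master equation via \eqref{bdn}. They contain no spectral variables, and the elimination must hold identically for $\mathbf U$. So your $\mathbf A_M$, built from $\mathbf I+\mathbf B_{k,k+1}$ with diagonal factors such as $\mu_i\xi_k+\lambda_i$ and invertible only for generic $\xi$, is a different object, and the monomial argument says nothing about $\mathfrak A_k$.

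Your triangularity step also fails for the actual operators. $\mathbf B$ creates an inversion and $\mathbf B'$ removes one, and $\mathcal X_k$ composes both, so inversion number is not monotone. Already for $n=3$ with $\mu_1=0$, $\mu_2=1$, Lemma~\ref{lem:X-structure} gives three cases:
\begin{itemize}
\item[(i)] $\mathbf X|213\rangle=|132\rangle$, a purely cross-species move that preserves the inversion number;
\item[(ii)] $\mathbf X|113\rangle=|131\rangle$, which raises it;
\item[(iii)] $\mathbf X|212\rangle=|122\rangle$, which lowers it.
\end{itemize}
The closing ``peel off the strongest species'' factorization is likewise unsupported. Letters of the strongest species are moved within words by both $\mathcal B$ and $\mathcal B'$ (e.g.\ $\mathbf B|12\rangle=|21\rangle$), and the recursion for $\mathfrak A_k$ is organized by positions, not by species.

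The missing idea is nilpotency. For $n=3$, Lemma~\ref{lem:X-structure} shows that $\mathbf X$ is nilpotent except on the sectors $|iii\rangle$, where it acts by the scalar $\mu_i\lambda_i$. Your observation that one of $\mu_i,\lambda_i$ vanishes should be used precisely to kill this eigenvalue, giving $\mathbf X^2=\mathbf 0$ and $(\mathbf I-\mathbf X)^{-1}=\mathbf I+\mathbf X$. For general $k$, $\mu_i\lambda_i=0$ restores the local relation $(\mathcal B_{k+1}\cdots\mathcal B_2\mathcal B_1'\cdots\mathcal B_k')^2=\mathbf 0$ of \cite{Lee2026}. The inductive argument of Corollary~3.9 there then yields $\mathcal X_k^2=\mathbf 0$, hence $\mathfrak A_k^{-1}=\mathbf I+\mathcal X_k$ (Corollary~\ref{1126am331}). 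This holds uniformly on all $V_M$, with no species decomposition needed, and the $\mathbf Y$-side for $\mathcal T_i^+$ is analogous. Once this is in place, your final step (the invertibility proposition plus Theorem~\ref{356pam}) is exactly how the paper concludes.
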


For $\mu_i \in (0,1)$, we establish this invertibility in several nontrivial classes of species compositions.

\begin{theorem}[Verified cases of invertibility]\label{757pm}
The invertibility of the associated operators holds for the following multisets $M$:
\begin{itemize}
\item[(i)] $M=[\nu_1,\dots,\nu_n]$ with $\nu_1=\cdots=\nu_n$;
\item[(ii)] $M=[\nu_1,\dots,\nu_n]$ with all $\nu_i$ distinct;
\item[(iii)] $M=[\nu_1,\dots,\nu_n]$ with $\nu_1 \neq \nu_2=\cdots=\nu_n$.
\end{itemize}
\end{theorem}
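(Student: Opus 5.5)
The plan is to make explicit the operators whose invertibility is at stake, and then to check invertibility separately in each of the three cases.

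\textbf{Setting up the reduction.} Fix a multiset $M$ of species labels, and consider the subspace spanned by the words $\pi$ that are rearrangements of $M$. In the coordinate Bethe ansatz reduction, a cluster of $n$ adjacent particles must be rewritten in terms of two-particle boundary conditions. This leads to a linear identity of the form $\mathbf{A}_M\,\mathbf{P}=\mathbf{B}_M\,\mathbf{P}$ on that subspace. The operator $\mathbf{A}_M$ collects the possible outcomes of the recursive hidden-state cascade, and is built from products of the local two-particle interaction matrices. Our reduction needs $\mathbf{A}_M$ to be invertible.

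\textbf{Choosing an order.} I would order the words lexicographically, or by the number of inversions relative to the sorted word. In this order, the interactions between distinct species (jump over a stronger particle, swap with a weaker one) are deterministic and monotone, so they contribute only triangular parts. Only same-species encounters carry the weights $\mu_i$ and $\lambda_i$. The aim is therefore to write $\mathbf{A}_M$ as block triangular. Its diagonal blocks should come from maximal runs of equal species, and their determinants should be explicit polynomials in $\mu_i$ and $\lambda_i$.

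\textbf{The three cases.}
\begin{itemize}
\item[(i)] If all species are equal, the subspace is one-dimensional, so $\mathbf{A}_M$ is a scalar. This scalar is a sum over the cascade outcomes, namely jump-overs with weight $\mu$ and exchanges with weight $\lambda$. I expect it to telescope to an expression such as $\sum_k \mu^k\lambda^{n-1-k}$, or a similar sum with all terms positive. Such an expression is nonzero for $\mu\in(0,1)$, which gives invertibility.
\item[(ii)] If all species are distinct, no $\mu_i$ enters at all. Then $\mathbf{A}_M$ coincides with the operator for the model of \cite{Lee2026}. With the monotone order above it is unitriangular, and hence invertible.
\item[(iii)] If $\nu_1\neq\nu_2=\cdots=\nu_n$, the distinct rearrangements are determined by the position $k\in\{1,\dots,n\}$ of the single particle of species $\nu_1$. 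So $\mathbf{A}_M$ is an $n\times n$ matrix. Its entries come from the cascades of the equal block, which are case (i) scalars on both sides of the odd particle, together with a deterministic crossing of the odd particle.
\end{itemize}

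\textbf{Main obstacle.} I expect case (iii) to be the hard step. Whether $\nu_1$ is stronger or weaker than the common species changes the cascade direction, and the reversed probabilities for leftward displacement also enter. My plan is to show that, after ordering the rearrangements by $k$, the matrix $\mathbf{A}_M$ is triangular up to a rank-one or bidiagonal correction. The determinant can then be computed by expanding recursively in $n$. I expect a product of the case (i) scalars for block sizes $k-1$ and $n-k$, which is nonzero since each factor is a positive polynomial. If triangularity fails, I would fall back on an induction on $n$. There I would use a Schur complement with respect to the block with $k=n$, and reduce to the $(n-1)$-particle instance of the same composition class.
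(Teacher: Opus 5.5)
There is a genuine gap, and it begins before case (iii): you never say which operators have to be invertible. In the paper they are the pivots of block elimination for the tridiagonal system $\mathbf W_{m,i}=\mathcal B_i\mathbf W_{m,i-1}+\mathcal B_i'\mathbf W_{m,i+1}$. These are $\mathfrak A_0=\mathbf I$ and $\mathfrak A_k=\mathbf I-\mathcal X_k$, with $\mathcal X_k=\mathcal B_{k+1}\mathfrak A_{k-1}^{-1}\mathcal B_k'$.

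Each pivot contains the inverse of its predecessor. Any proof must therefore be an induction on $k$ that tracks the form of $\mathfrak A_{k-1}^{-1}$. A single $n\times n$ matrix $\mathbf A_M$ whose determinant you then compute does not capture this. If you mean the block-tridiagonal matrix $T_m$ of the whole system, then $\det T_m=\prod_{k<m}\det\mathfrak A_k$. You would then need $\det T_m\neq0$ for every $m$, and in case (iii) $T_m$ is $mn\times mn$ on $V_M$, not $n\times n$.

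The mismatch already shows in case (i). On $V_{[i,\dots,i]}$ the operator $\mathfrak A_k$ is the scalar $a_{k,i}=1-\mu_i\lambda_i/a_{k-1,i}=S^{(i)}_{k+1}/S^{(i)}_k$, which arises from a subtraction. The positive sum $\sum_r\mu_i^{r}\lambda_i^{n-1-r}$ you anticipate is instead the telescoped product $a_{1,i}\cdots a_{n-2,i}$. That every $a_{k,i}$ is nonzero is exactly what must be proved. The paper gets it from $\mu_i\lambda_i\le\tfrac14$, which gives $a_{k,i}\ge r_i>0$ by induction.

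Case (iii) carries the real content, and there you offer only a hoped-for shape: triangular up to a rank-one or bidiagonal correction, a guessed determinant, and a Schur-complement fallback. None of it is checked. Ordering by the position of the odd particle is the right instinct, but the missing ingredient is the paper's structural result, in two parts.

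First, take $e_r=|1^r21^{n-r-1}\rangle$ and the ordering $(e_0,\dots,e_{k-1}\mid e_k,e_{k+1}\mid e_{k+2},\dots,e_{n-1})$. Then $\mathcal X_k$ is block diagonal: diagonal on the two outer blocks and strictly upper triangular on the middle $2\times2$ block (Lemma~\ref{lem:block-form-Xk}).

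Second, the outer entries are one-species quantities:
\begin{itemize}
\item On $\operatorname{span}\{e_{k+2},\dots,e_{n-1}\}$ the odd particle lies to the right of the sites touched by $\mathcal B_k'$ and $\mathcal B_{k+1}$. So $\mathcal X_k$ acts as the one-species scalar $\mu_1\lambda_1/a_{k-1,1}$.
\item On $\operatorname{span}\{e_0,\dots,e_{k-1}\}$, Lemma~\ref{1203ama} shows that $\mathcal X_k$ acts as the scalar $\mu_1\lambda_1/a_{k-2,1}$ for $k\ge2$, as if the stronger particle were absent.
\end{itemize}
Both facts are proved by induction on $k$. Since $\mu_1\lambda_1/a_{m,1}=1-a_{m+1,1}\in[0,1)$, this reduces (iii) to the scalar recursion of (i). It gives
\[
\det(\mathfrak A_k|_{V_M})=a_{k-1,1}^{\,k}\,a_{k,1}^{\,n-k-2}>0 .
\]
So the determinant does not split into case (i) factors for the two runs on either side of the odd particle, as you guessed.

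Your worry about orientation is justified. The paper treats only the case where the odd particle is the stronger one, calling this without loss of generality. Take instead the odd particle to be weaker, with $f_r=|2^r12^{n-r-1}\rangle$ and $n\ge4$. Then one finds $\mathcal X_2f_2=\mu_2f_1+\mu_2^2f_0$, so the analogous block-diagonal form fails, although the matrix is still upper triangular in this instance. A complete argument has to handle both orientations.
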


The invertibility of the general case ($\mu_i \in (0,1)$ with arbitrary species composition) remains open due to the complexity of the operators arising in the reduction procedure. Numerical evidence for small systems suggests that the invertibility condition may hold more broadly.

\section{Integrability}\label{Section3}
\subsection{Two-particle system}\label{TwoParticle}
We first examine the two-particle system, since it already captures the full local interaction structure of the model. In particular, both the species-dependent same-species interaction and the bidirectional dynamics are encoded at this level.

To identify the structural constraints required for integrability in the bidirectional setting, we first consider a general form of the local interaction rules.
\begin{itemize}
\item [(i)] When a particle of species $i$ at site $x$ attempts to jump to $x+1$, which is occupied by another particle of the same species, the interaction is resolved probabilistically:
\begin{itemize}
\item[$\bullet$] with probability $\mu^r_i$, the incoming particle jumps over the resident particle and moves to $x+2$;
\item[$\bullet$] with probability $\lambda^r_i = 1-\mu^r_i$, the incoming particle and the resident particle exchange positions.
\end{itemize}

\item [(ii)] When a particle of species $i$ at site $x$ attempts to jump to $x-1$, which is occupied by another particle of the same species, the interaction is resolved probabilistically:
\begin{itemize}
\item[$\bullet$] with probability $\mu^l_i$, the incoming particle jumps over the resident particle and moves to $x-2$;
\item[$\bullet$] with probability $\lambda^l_i = 1-\mu^l_i$, the incoming particle and the resident particle exchange positions.
\end{itemize}
\end{itemize}
These interaction rules can be encoded in matrix form, which will be convenient for the formulation of the master equation and the subsequent analysis.

Let
\[
\mathbf{M}^r =
\begin{pmatrix}
\mu^r_1 & 0 & 0 & 0 \\
0 & 0 & 0 & 0 \\
0 & 1 & 0 & 0 \\
0 & 0 & 0 & \mu^r_2
\end{pmatrix},\quad
\mathbf{N}^r =
\begin{pmatrix}
\lambda^r_1 & 0 & 0 & 0 \\
0 & 0 & 1 & 0 \\
0 & 0 & 0 & 0 \\
0 & 0 & 0 & \lambda^r_2
\end{pmatrix},
\]
be matrices encoding the local two-particle interactions corresponding to rightward \textit{jumping over} and \textit{swapping positions}, respectively, and
\[
\mathbf{M}^l =
\begin{pmatrix}
\mu^l_1 & 0 & 0 & 0 \\
0 & 0 & 1 & 0 \\
0 & 0 & 0 & 0 \\
0 & 0 & 0 & \mu^l_2
\end{pmatrix},\quad
\mathbf{N}^l =
\begin{pmatrix}
\lambda^l_1 & 0 & 0 & 0 \\
0 & 0 & 0 & 0 \\
0 & 1 & 0 & 0 \\
0 & 0 & 0 & \lambda^l_2
\end{pmatrix}
\]
be matrices corresponding to leftward \textit{jumping over} and \textit{swapping positions}, respectively.

Let $\mathbf{U}(x_1,x_2;t)$ be an $N^2 \times N^2$ matrix-valued function whose entries coincide with transition probabilities for $x_1<x_2$, and which is extended to all $(x_1,x_2)\in\mathbb{Z}^2$ via boundary relations.

The evolution of $\mathbf{U}(x_1,x_2;t)$ is governed by the master equation
\begin{equation}\label{1005pm810a}
\begin{aligned}
\frac{d}{dt}\mathbf{U}(x_1,x_2;t)
&= p\,\mathbf{U}(x_1-1,x_2;t)
   + p\,\mathbf{U}(x_1,x_2-1;t) + q\,\mathbf{U}(x_1+1,x_2;t)
   + q\,\mathbf{U}(x_1,x_2+1;t) \\
&\quad - 2\,\mathbf{U}(x_1,x_2;t),
\end{aligned}
\end{equation}
when $x_2 > x_1+1$, and by
\begin{equation}\label{931pm810b}
\begin{aligned}
\frac{d}{dt}\mathbf{U}(x,x+1;t)
&= p\Bigl[
    \mathbf{U}(x-1,x+1;t)
    + \mathbf{M}^r\,\mathbf{U}(x-1,x;t)
    + \mathbf{N}^r\,\mathbf{U}(x,x+1;t)
\Bigr] \\
&\quad + q\Bigl[
    \mathbf{U}(x,x+2;t)
    + \mathbf{M}^l\,\mathbf{U}(x+1,x+2;t)
    + \mathbf{N}^l\,\mathbf{U}(x,x+1;t)
\Bigr] \\
&\quad - 2\,\mathbf{U}(x,x+1;t),
\end{aligned}
\end{equation}
when $x_2 = x_1+1$.

Equations \eqref{1005pm810a} and \eqref{931pm810b} can be unified into a single equation on $\mathbb{Z}^2$, namely \eqref{1005pm810a}, together with the boundary condition
\begin{equation}\label{209am324}
\begin{aligned}
&p\Bigl[
   \mathbf{M}^r\,\mathbf{U}(x-1,x;t)
    + \mathbf{N}^r\,\mathbf{U}(x,x+1;t)
\Bigr] + q\Bigl[
    \mathbf{M}^l\,\mathbf{U}(x+1,x+2;t)
    + \mathbf{N}^l\,\mathbf{U}(x,x+1;t)
\Bigr] \\
&= p\,\mathbf{U}(x,x;t) + q\,\mathbf{U}(x+1,x+1;t).
\end{aligned}
\end{equation}

To ensure that the boundary condition can be written in a unified form, independent of the direction of motion, we impose the relations
\begin{equation}\label{902pm}
\mu^l_i = \lambda^r_i =: \lambda_i, \qquad \mu^r_i = \lambda^l_i =: \mu_i,
\end{equation}
so that $\mathbf{M}^r = \mathbf{N}^l =: \mathbf{B}$ and $\mathbf{N}^r = \mathbf{M}^l =: \mathbf{B}'$.

Under this identification, the raw boundary relation \eqref{209am324} becomes
\begin{equation}\label{209am324b}
\begin{aligned}
&p\Bigl[
   \mathbf{B}\,\mathbf{U}(x-1,x;t)
    + \mathbf{B}'\,\mathbf{U}(x,x+1;t)
\Bigr]  + q\Bigl[
    \mathbf{B}'\,\mathbf{U}(x+1,x+2;t)
    + \mathbf{B}\,\mathbf{U}(x,x+1;t)
\Bigr] \\
&= p\,\mathbf{U}(x,x;t) + q\,\mathbf{U}(x+1,x+1;t).
\end{aligned}
\end{equation}
We impose
\begin{equation}\label{212am324}
\mathbf{U}(x,x;t)
=
\mathbf{B}\,\mathbf{U}(x-1,x;t)
+
\mathbf{B}'\,\mathbf{U}(x,x+1;t)
\end{equation}
as a sufficient boundary condition. Then \eqref{212am324}, together with its shifted version at $x+1$ (i.e., \eqref{212am324} with $x$ replaced by $x+1$), implies \eqref{209am324b}.

Hence, we take \eqref{212am324} as the defining boundary condition of the model. It has the same structural form as equation~(6) in \cite{Lee2026}, with the interaction matrices replaced by their generalized counterparts, and extends the boundary conditions appearing in the single-species models of \cite{Ali,Ali2}. Moreover, the identification \eqref{902pm} is consistent with the defining rules given in Section~\ref{subsubsection11}.

\subsection{Three-particle system: Two-particle reducibility}\label{408ama}

In this section, we demonstrate a key structural feature underlying integrability, the \emph{two-particle reducibility}: the interaction of three particles can be described entirely in terms of successive two-particle interactions. The three-particle analysis captures the essential mechanism of the general $n$-particle reduction and will serve as a prototype for the arguments in the next section.

Although the boundary condition \eqref{212am324} has the same structural form as in \cite{Lee2026} and the overall strategy for establishing integrability is similar, the algebraic analysis in the present model is substantially more involved. In \cite{Lee2026}, the matrices $\mathbf{B}$ and $\mathbf{B}'$ give rise to nilpotent operators, which play a crucial role in establishing reducibility. In contrast, due to the presence of $\mu_i$ and $\lambda_i$ in $\mathbf{B}$ and $\mathbf{B}'$ in \eqref{212am324}, such nilpotent structures no longer persist in general.

We consider a three-particle system in which each particle has species in $\{1,\dots,N\}$. As an extension of the $4\times4$ matrices $\mathbf{B}$ and $\mathbf{B}'$ from the two-particle case, we define the $N^2 \times N^2$ matrices $\mathbf{B}=(b_{\pi,\nu})$ and $\mathbf{B}'=(b'_{\pi,\nu})$. The rows and columns are indexed lexicographically by words $\pi=\pi_1\pi_2$ and $\nu=\nu_1\nu_2$ ranging from $11$ to $NN$, and the entries are given by
\begin{equation}\label{138am42}
\begin{aligned}
b_{\pi,\nu} &=
\begin{cases}
\mu_i & \text{if } \pi=\nu=ii,\\[4pt]
1 & \text{if } \pi_1=\nu_2,\ \pi_2=\nu_1,\ \text{and } \nu_1<\nu_2,\\[4pt]
0 & \text{otherwise},
\end{cases}
\\[6pt]
b'_{\pi,\nu} &=
\begin{cases}
\lambda_i=1-\mu_i & \text{if } \pi=\nu=ii,\\[4pt]
1 & \text{if } \pi_1=\nu_2,\ \pi_2=\nu_1,\ \text{and } \nu_1>\nu_2,\\[4pt]
0 & \text{otherwise}.
\end{cases}
\end{aligned}
\end{equation}

Let $\mathbf{U}(x_1,x_2,x_3;t)$ be an $N^3 \times N^3$ matrix-valued function whose entries represent transition probabilities for $x_1<x_2<x_3$, and extend it to all $(x_1,x_2,x_3)\in\mathbb{Z}^3$ via boundary conditions which will be given below.

The master equation is
\begin{equation}\label{1055pm325}
\frac{d}{dt}\mathbf U(x_1,x_2,x_3;t)
=
\sum_{i=1}^3 \big( p\mathcal{T}_i^- + q\mathcal{T}_i^+ - 1 \big)\mathbf U(x_1,x_2,x_3;t),
\end{equation}
where $\mathcal{T}_i^{\pm}$ shifts the $i$-th spatial coordinate by $\pm1$, and the boundary conditions are the natural extensions of \eqref{212am324}:
\begin{align}
\mathbf{U}(x,x,x';t)
&= (\mathbf{B} \otimes \mathbf{I})\,\mathbf{U}(x-1,x,x';t)
+ (\mathbf{B}' \otimes \mathbf{I})\,\mathbf{U}(x,x+1,x';t), \label{1154am8121}\\[6pt]
\mathbf{U}(x',x,x;t)
&= (\mathbf{I} \otimes \mathbf{B})\,\mathbf{U}(x',x-1,x;t)
+ (\mathbf{I} \otimes \mathbf{B}')\,\mathbf{U}(x',x,x+1;t), \label{1155am8121}
\end{align}
for all $x,x'\in\mathbb{Z}$ where $\mathbf{I}$ denote the $N \times N$ identity matrix.

The master equation \eqref{1055pm325}, together with the boundary conditions \eqref{1154am8121}–\eqref{1155am8121}, induces evolution equations that depend on the relative positions of the particles. In particular, when two particles are adjacent while the remaining particle is separated, namely in the regions
\begin{equation}
x_1 = x_2 - 1 < x_3 - 1, \qquad
x_1 < x_2 - 1 = x_3 - 1,
\end{equation}
the evolution equation involves terms corresponding to non-admissible configurations,
\[
(x,x,x') \quad (x < x'-1), \qquad
(x',x,x) \quad (x' < x-1),
\]
in which two particles occupy the same site. These configurations are eliminated using the boundary conditions \eqref{1154am8121} and \eqref{1155am8121}.

When all three particles are adjacent, that is, $x_1 = x_2 - 1 = x_3 - 2$,
additional nontrivial contributions arise due to the interaction of overlapping boundary terms, reflecting the simultaneous interaction of all three particles.

\subsubsection{Elimination of non-admissible configurations}\label{302am1}

In this section, we express the non-admissible terms
\[
\mathbf U(x,x,x+1;t), \quad \mathbf U(x,x+1,x+1;t),
\]
which arise in the master equation when all three particles are adjacent, in terms of admissible configurations using $\mathbf B$ and $\mathbf B'$.

From \eqref{1154am8121} and \eqref{1155am8121}, we obtain the coupled system for the non-admissible configurations:
\begin{align}
\mathbf U(x,x,x+1;t)
&=
(\mathbf B \otimes \mathbf I)\,\mathbf U(x-1,x,x+1;t)
+
(\mathbf B' \otimes \mathbf I)\,\mathbf U(x,x+1,x+1;t),
\label{eq:red1-comb}\\
\mathbf U(x,x+1,x+1;t)
&=
(\mathbf I \otimes \mathbf B)\,\mathbf U(x,x,x+1;t)
+
(\mathbf I \otimes \mathbf B')\,\mathbf U(x,x+1,x+2;t).
\label{eq:red2-comb}
\end{align}

This system can be solved by eliminating one of the non-admissible variables.

Substituting \eqref{eq:red1-comb} into \eqref{eq:red2-comb} yields
\begin{equation}\label{eq:redX}
\begin{aligned}
(\mathbf I - \mathbf X)\,\mathbf U(x,x+1,x+1;t)
&=
(\mathbf I \otimes \mathbf B)(\mathbf B \otimes \mathbf I)\,
\mathbf U(x-1,x,x+1;t) \\
&\quad
+
(\mathbf I \otimes \mathbf B')\,\mathbf U(x,x+1,x+2;t),
\end{aligned}
\end{equation}
where $\mathbf X := (\mathbf I \otimes \mathbf B)(\mathbf B' \otimes \mathbf I)$.
Provided that $\mathbf I-\mathbf X$ is invertible (a property that will be verified in Section~\ref{302am2}), we obtain
\begin{equation}\label{eq:redX-sol}
\begin{split}
\mathbf U(x,x+1,x+1;t)
&=
(\mathbf I-\mathbf X)^{-1}
(\mathbf I \otimes \mathbf B)(\mathbf B \otimes \mathbf I)\,\mathbf U(x-1,x,x+1;t)\\
&\quad
+
(\mathbf I-\mathbf X)^{-1}
(\mathbf I \otimes \mathbf B')\,\mathbf U(x,x+1,x+2;t).
\end{split}
\end{equation}
Substituting \eqref{eq:redX-sol} into \eqref{eq:red1-comb} then expresses $\mathbf U(x,x,x+1;t)$ entirely in terms of admissible configurations.

Alternatively, substituting \eqref{eq:red2-comb} into \eqref{eq:red1-comb} gives
\begin{equation}\label{eq:redY}
\begin{aligned}
(\mathbf I - \mathbf Y)\,\mathbf U(x,x,x+1;t)
&=
(\mathbf B \otimes \mathbf I)\,\mathbf U(x-1,x,x+1;t) \\
&\quad
+
(\mathbf B' \otimes \mathbf I)(\mathbf I \otimes \mathbf B')\,
\mathbf U(x,x+1,x+2;t),
\end{aligned}
\end{equation}
where $\mathbf Y := (\mathbf B' \otimes \mathbf I)(\mathbf I \otimes \mathbf B)$.

Provided that $\mathbf I-\mathbf Y$ is invertible, this yields an alternative expression for $\mathbf U(x,x,x+1;t)$:
\begin{equation}\label{eq:redY-sol}
\begin{split}
\mathbf U(x,x,x+1;t)
&=
(\mathbf I-\mathbf Y)^{-1}
(\mathbf B \otimes \mathbf I)\,\mathbf U(x-1,x,x+1;t)\\
&\quad
+
(\mathbf I-\mathbf Y)^{-1}
(\mathbf B' \otimes \mathbf I)(\mathbf I \otimes \mathbf B')\,\mathbf U(x,x+1,x+2;t).
\end{split}
\end{equation}
Substituting \eqref{eq:redY-sol} into \eqref{eq:red2-comb} expresses $\mathbf U(x,x+1,x+1;t)$ in terms of admissible configurations. Both elimination procedures yield equivalent expressions, showing that all non-admissible configurations can be expressed by admissible ones.
\subsubsection{Invertibility of $(\mathbf I-\mathbf X)$ and $(\mathbf I-\mathbf Y)$}\label{302am2}
In this section, we show the invertibility of $(\mathbf I-\mathbf X)$ and $(\mathbf I-\mathbf Y)$. The analysis of the two operators is parallel.

Note that \(\mathbf B'\otimes \mathbf I\) describes the interaction in which the incoming particle pushes the resident particle backward, acting on the first and second particles, which results in swapping positions. On the other hand, \(\mathbf I\otimes \mathbf B\) describes the interaction in which the incoming particle jumps over the resident particle, acting on the second and third particles.

It is convenient to describe the action of these operators on the canonical basis using bra--ket notation. For a word \(\nu=\nu_1\nu_2\nu_3\in\{1,\dots,N\}^3\), we write $
|\nu\rangle = |\nu_1\nu_2\nu_3\rangle $ for the corresponding basis vector in \(\mathbb C^{N^3}\). For two words \(\pi,\nu\), the matrix element of an operator \(\mathbf A\) is written as $\langle \pi \mid \mathbf A \mid \nu \rangle,$ which coincides with the \((\pi,\nu)\)-entry of \(\mathbf A\) under lexicographic ordering. In this notation, the action of \(\mathbf A\) on basis vectors is given by
\[
\mathbf A|\nu\rangle = \sum_{\pi}\langle \pi \mid \mathbf A \mid \nu\rangle\,|\pi\rangle.
\]

In the special case $\mu_i=1$ for all $i$, it was shown in \cite{Lee2026} that $\mathbf X$ is nilpotent, which implies the invertibility of $(\mathbf I-\mathbf X)$. In the present general setting $\mu_i \in [0,1]$, $\mathbf X$ is no longer nilpotent on the whole space. However, Lemma~\ref{lem:X-structure} below shows that $\mathbf X$ is nilpotent on the complement of the single-species sector and acts on each vector $|iii\rangle$ by the scalar $\mu_i\lambda_i$. Since $0\le \mu_i\lambda_i<1$, this still ensures the invertibility of $(\mathbf I-\mathbf X)$.
\begin{lemma}\label{lem:X-structure}
Let
\[
\mathbf X := (\mathbf I\otimes \mathbf B)(\mathbf B'\otimes \mathbf I).
\]
Then the following hold.
\begin{itemize}
\item[(i)] The action of \(\mathbf X\) on the basis vector \(|\nu_1\nu_2\nu_3\rangle\) is given by
\[
\mathbf X|\nu_1\nu_2\nu_3\rangle
=
\begin{cases}
\lambda_{\nu_1}\mu_{\nu_1}\,|\nu_1\nu_1\nu_1\rangle, & \nu_1=\nu_2=\nu_3, \\[4pt]
\lambda_{\nu_1}\,|\nu_1\nu_3\nu_1\rangle, & \nu_1=\nu_2<\nu_3, \\[4pt]
\mu_{\nu_1}\,|\nu_2\nu_1\nu_1\rangle, & \nu_1>\nu_2,\ \nu_1=\nu_3, \\[4pt]
|\nu_2\nu_3\nu_1\rangle, & \nu_3>\nu_1>\nu_2, \\[4pt]
0, & \text{otherwise}.
\end{cases}
\]

\item[(ii)] For every \(k\ge2\),
\[
\mathbf X^k=\sum_{i=1}^N (\mu_i\lambda_i)^k \mathbf E_i,~~~\textrm{where}~\mathbf E_i=|iii\rangle\langle iii|.
\]
\end{itemize}
\end{lemma}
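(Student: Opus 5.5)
Part (i) is a direct computation from \eqref{138am42}. First I would record the two-site actions of the basic operators on basis vectors $|ab\rangle$:
\[
\mathbf B|ab\rangle=\begin{cases}\mu_a|aa\rangle,& a=b,\\ |ba\rangle,& a<b,\\ 0,& a>b,\end{cases}
\qquad
\mathbf B'|ab\rangle=\begin{cases}\lambda_a|aa\rangle,& a=b,\\ |ba\rangle,& a>b,\\ 0,& a<b.\end{cases}
\]
These follow from reading off the column $\nu=ab$ of each matrix: for $\nu_1<\nu_2$, $\mathbf B$ has a single $1$ in row $\pi=\nu_2\nu_1$, and symmetrically for $\mathbf B'$.

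Then $\mathbf X|\nu_1\nu_2\nu_3\rangle$ is obtained by first applying $\mathbf B'\otimes\mathbf I$ to $|\nu_1\nu_2\rangle$, which kills the vector unless $\nu_1\ge\nu_2$. I would then apply $\mathbf I\otimes\mathbf B$ to the resulting last two letters. The cases split as follows.
\begin{itemize}
\item $\nu_1=\nu_2$: the intermediate vector is $\lambda_{\nu_1}|\nu_1\nu_1\nu_3\rangle$, and $\mathbf B$ acts on $|\nu_1\nu_3\rangle$. This gives $\mu_{\nu_1}$ times the same vector if $\nu_3=\nu_1$, the swap $|\nu_3\nu_1\rangle$ if $\nu_1<\nu_3$, and $0$ otherwise.
\item $\nu_1>\nu_2$: the intermediate vector is $|\nu_2\nu_1\nu_3\rangle$, and $\mathbf B$ acts on $|\nu_1\nu_3\rangle$. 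This gives $\mu_{\nu_1}|\nu_2\nu_1\nu_1\rangle$ if $\nu_3=\nu_1$, the vector $|\nu_2\nu_3\nu_1\rangle$ if $\nu_3>\nu_1$, and $0$ otherwise.
\end{itemize}
These are exactly the five listed cases.

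For (ii), I would first note that $\mathbf X$ is a weighted partial map on basis vectors: each basis vector goes to a scalar multiple of a single basis vector, or to $0$. So $\mathbf X^k|\nu\rangle$ is computed by iterating this map, and the claim reduces to showing two things.
\begin{itemize}
\item Every basis vector other than $|iii\rangle$ is sent to $0$ by $\mathbf X^2$.
\item $|iii\rangle$ is an eigenvector with eigenvalue $\mu_i\lambda_i$.
\end{itemize}
The second is case one of (i). For the first, I would check that the image of each non-diagonal case lies in the kernel of $\mathbf X$.
\begin{itemize}
\item Case $\nu_1=\nu_2<\nu_3$ maps to $|a\,c\,a\rangle$ with $a<c$. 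Its first two letters satisfy $a<c$, so $\mathbf B'\otimes\mathbf I$ kills it.
\item Case $\nu_1>\nu_2=:b$, $\nu_3=\nu_1=:a$ maps to $|b\,a\,a\rangle$ with $b<a$. Again the first two letters are increasing, so it is killed.
\item Case $\nu_3>\nu_1>\nu_2$ maps to $|\nu_2\nu_3\nu_1\rangle$ with $\nu_2<\nu_3$. It is killed for the same reason.
\end{itemize}
Hence $\mathbf X^2|\nu\rangle=0$ for every non-constant word $\nu$. Since $|iii\rangle$ is an eigenvector, induction on $k\ge2$ gives $\mathbf X^k=\sum_i(\mu_i\lambda_i)^k\mathbf E_i$.

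The only real care needed is the bookkeeping in (i): getting the orientation conventions of $\mathbf B$ and $\mathbf B'$ right, and the order of composition, with $\mathbf B'\otimes\mathbf I$ applied first. Once the table in (i) is verified, the key observation for (ii) is immediate: every non-diagonal image has an increasing first pair. That observation is what makes $\mathbf X$ nilpotent of order $2$ off the single-species sector.
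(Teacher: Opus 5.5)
Your proposal is correct and follows the paper's proof essentially verbatim. For (i) you use the same two-step computation, applying $\mathbf B'\otimes\mathbf I$ first and then $\mathbf I\otimes\mathbf B$. For (ii) you use the same key observation: every nonzero image $\mathbf X|\nu\rangle$ with $\nu$ non-constant has a strictly increasing first pair, so it is killed by $\mathbf B'\otimes\mathbf I$, which forces $\mathbf X^2|\nu\rangle=0$, while $|iii\rangle$ is an eigenvector with eigenvalue $\mu_i\lambda_i$.
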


\begin{proof}
Part (i) follows by direct computation: first apply $\mathbf B'\otimes \mathbf I$ to the first two coordinates, and then apply $\mathbf I\otimes \mathbf B$ to the last two coordinates. For (ii), observe that if $|\nu_1\nu_2\nu_3\rangle \neq |iii\rangle$ for all $i$, then either $\mathbf X|\nu_1\nu_2\nu_3\rangle=0$, or $\mathbf X|\nu_1\nu_2\nu_3\rangle$ produces a vector whose first two coordinates are strictly increasing. In either case, applying $\mathbf B'\otimes \mathbf I$ once more yields zero, and hence
\[
\mathbf X^2|\nu_1\nu_2\nu_3\rangle=0.
\]
On the other hand, for $|iii\rangle$, we have
\[
\mathbf X|iii\rangle = \mu_i\lambda_i\,|iii\rangle,
\]
and thus
\[
\mathbf X^k|iii\rangle = (\mu_i\lambda_i)^k |iii\rangle
\quad \text{for all } k\ge1.
\]
Combining these observations gives the result.
\end{proof}

An analogous statement holds for $\mathbf Y$.

\begin{lemma}\label{lem:Y-structure}
Let
\[
\mathbf Y := (\mathbf B' \otimes \mathbf I)(\mathbf I \otimes \mathbf B).
\]
Then the following hold.
\begin{itemize}
\item[(i)] The action of \(\mathbf Y\) on the basis vector \(|\nu_1\nu_2\nu_3\rangle\) is given by
\[
\mathbf Y|\nu_1\nu_2\nu_3\rangle
=
\begin{cases}
\lambda_{\nu_1}\mu_{\nu_1}\,|\nu_1\nu_1\nu_1\rangle, & \nu_1=\nu_2=\nu_3,\\[4pt]
\mu_{\nu_2}\,|\nu_2\nu_1\nu_2\rangle, & \nu_1>\nu_2=\nu_3,\\[4pt]
\lambda_{\nu_1}\,|\nu_1\nu_1\nu_2\rangle, & \nu_1=\nu_3>\nu_2,\\[4pt]
|\nu_3\nu_1\nu_2\rangle, & \nu_1>\nu_3>\nu_2,\\[4pt]
0, & \text{otherwise}.
\end{cases}
\]

\item[(ii)] For every \(k\ge2\),
\[
\mathbf Y^k=\sum_{i=1}^N (\mu_i\lambda_i)^k \mathbf E_i,~~\textrm{where}~~ \mathbf E_i=|iii\rangle\langle iii|.
\]
\end{itemize}
\end{lemma}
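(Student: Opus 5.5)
The plan is to mirror the proof of Lemma~\ref{lem:X-structure}, with the order of the two factors reversed: first apply $\mathbf I\otimes\mathbf B$ to the last two coordinates, then apply $\mathbf B'\otimes\mathbf I$ to the first two. From \eqref{138am42}, the two-site actions are:
\begin{itemize}
\item $\mathbf B|ii\rangle=\mu_i|ii\rangle$, $\mathbf B|ab\rangle=|ba\rangle$ if $a<b$, and $\mathbf B|ab\rangle=0$ if $a>b$;
\item $\mathbf B'|ii\rangle=\lambda_i|ii\rangle$, $\mathbf B'|ab\rangle=|ba\rangle$ if $a>b$, and $\mathbf B'|ab\rangle=0$ if $a<b$.
\end{itemize}
Thus $\mathbf B$ annihilates strictly decreasing pairs and $\mathbf B'$ annihilates strictly increasing pairs. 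These facts drive everything.

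For part (i), I split according to the pair $(\nu_2,\nu_3)$. If $\nu_2>\nu_3$, then $\mathbf I\otimes\mathbf B$ already gives $0$. If $\nu_2=\nu_3$, we get $\mu_{\nu_2}|\nu_1\nu_2\nu_2\rangle$, and $\mathbf B'\otimes\mathbf I$ acting on $(\nu_1,\nu_2)$ then gives:
\begin{itemize}
\item $\lambda_{\nu_1}\mu_{\nu_1}|\nu_1\nu_1\nu_1\rangle$ when $\nu_1=\nu_2$;
\item $\mu_{\nu_2}|\nu_2\nu_1\nu_2\rangle$ when $\nu_1>\nu_2$;
\item $0$ when $\nu_1<\nu_2$.
\end{itemize}
If $\nu_2<\nu_3$, we get $|\nu_1\nu_3\nu_2\rangle$, and $\mathbf B'\otimes\mathbf I$ acting on $(\nu_1,\nu_3)$ gives:
\begin{itemize}
\item $\lambda_{\nu_1}|\nu_1\nu_1\nu_2\rangle$ when $\nu_1=\nu_3$ (so $\nu_1=\nu_3>\nu_2$);
\item $|\nu_3\nu_1\nu_2\rangle$ when $\nu_1>\nu_3$;
\item $0$ otherwise.
\end{itemize}
These are exactly the listed cases.

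For part (ii), the key observation is that every nonzero output of $\mathbf Y$ on a basis vector other than some $|iii\rangle$ has strictly decreasing last two coordinates. Concretely, the outputs are $|\nu_2\nu_1\nu_2\rangle$ with $\nu_1>\nu_2$, $|\nu_1\nu_1\nu_2\rangle$ with $\nu_1>\nu_2$, and $|\nu_3\nu_1\nu_2\rangle$ with $\nu_1>\nu_2$. Applying $\mathbf Y$ once more starts with $\mathbf I\otimes\mathbf B$, which annihilates such vectors. Hence $\mathbf Y^2|\nu\rangle=0$ for every $\nu\neq iii$.

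Meanwhile $\mathbf Y|iii\rangle=\mu_i\lambda_i|iii\rangle$, so $\mathbf Y^k|iii\rangle=(\mu_i\lambda_i)^k|iii\rangle$ for all $k\ge1$. Combining the two facts gives $\mathbf Y^k=\sum_i(\mu_i\lambda_i)^k\mathbf E_i$ for all $k\ge2$.

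The only real care point is the case bookkeeping in (i), in particular keeping the species index on $\mu$ and $\lambda$ attached to the correct coordinate. Part (ii) then follows from the "decreasing last pair" observation.
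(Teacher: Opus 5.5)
Your proposal is correct and follows the route the paper intends. The paper states this lemma without proof, as the analogue of Lemma~\ref{lem:X-structure}, whose proof has two steps: a direct case computation for (i), and for (ii) the observation that every nonzero image of a basis vector other than $|iii\rangle$ has a coordinate pair annihilated by the factor that acts first. For $\mathbf Y$ this is your strictly decreasing last pair, killed by $\mathbf I\otimes\mathbf B$; for $\mathbf X$ it is a strictly increasing first pair, killed by $\mathbf B'\otimes\mathbf I$.
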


\begin{proposition}\label{1259pm}
Let
\[
\mathbf X := (\mathbf I\otimes \mathbf B)(\mathbf B'\otimes \mathbf I).
\]
Then
\begin{equation}\label{1148pm325}
(\mathbf I-\mathbf X)^{-1}
=
\mathbf I
+\mathbf X
+\sum_{i=1}^N
\frac{(\mu_i\lambda_i)^2}{1-\mu_i\lambda_i}\,\mathbf E_i,
\qquad
\mathbf E_i = |iii\rangle\langle iii|.
\end{equation}
\end{proposition}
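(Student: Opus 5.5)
The plan is to use the Neumann series, which converges thanks to Lemma~\ref{lem:X-structure}(ii). That lemma gives $\mathbf X^k=\sum_i(\mu_i\lambda_i)^k\mathbf E_i$ for every $k\ge 2$. Since $\mu_i\in[0,1]$, we have $0\le\mu_i\lambda_i\le 1/4<1$. Hence $\sum_{k\ge0}\mathbf X^k$ converges entrywise, and in fact it is a finite-dimensional sum of geometric series in each coordinate. We may then write
\[
\sum_{k\ge 0}\mathbf X^k=\mathbf I+\mathbf X+\sum_{i=1}^N\Big(\sum_{k\ge2}(\mu_i\lambda_i)^k\Big)\mathbf E_i
=\mathbf I+\mathbf X+\sum_{i=1}^N\frac{(\mu_i\lambda_i)^2}{1-\mu_i\lambda_i}\mathbf E_i .
\]
The standard telescoping identity $(\mathbf I-\mathbf X)\sum_{k=0}^{K}\mathbf X^k=\mathbf I-\mathbf X^{K+1}$ applies, and $\mathbf X^{K+1}\to 0$ as $K\to\infty$ by Lemma~\ref{lem:X-structure}(ii). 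Together these show that this series is a two-sided inverse of $\mathbf I-\mathbf X$.

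Alternatively, and more cleanly, I would avoid limits and verify the identity algebraically. Denote the right-hand side of \eqref{1148pm325} by $\mathbf R$ and compute $(\mathbf I-\mathbf X)\mathbf R$:
\[
(\mathbf I-\mathbf X)\mathbf R=\mathbf I-\mathbf X^2+\sum_i c_i(\mathbf E_i-\mathbf X\mathbf E_i),\qquad c_i=\frac{(\mu_i\lambda_i)^2}{1-\mu_i\lambda_i}.
\]
Two facts about the $\mathbf E_i$ are needed, both from Lemma~\ref{lem:X-structure}(i):
\begin{itemize}
\item $\mathbf X|iii\rangle=\mu_i\lambda_i|iii\rangle$, which gives $\mathbf X\mathbf E_i=\mu_i\lambda_i\mathbf E_i$;
\item $\mathbf X^2=\sum_i(\mu_i\lambda_i)^2\mathbf E_i$, from part (ii).
\end{itemize}
Substituting these, the expression becomes $\mathbf I-\sum_i(\mu_i\lambda_i)^2\mathbf E_i+\sum_i c_i(1-\mu_i\lambda_i)\mathbf E_i=\mathbf I$. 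A right inverse of a square matrix is automatically a two-sided inverse.

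The only point needing care is the identity $\mathbf X\mathbf E_i=\mu_i\lambda_i\mathbf E_i$, which is immediate from case one of Lemma~\ref{lem:X-structure}(i). The denominators are nonzero because $\mu_i\lambda_i\le 1/4$. I expect no genuine obstacle; the substantive work, the nilpotency off the diagonal sector, was already done in Lemma~\ref{lem:X-structure}. The same argument with Lemma~\ref{lem:Y-structure} gives the analogous formula for $(\mathbf I-\mathbf Y)^{-1}$.
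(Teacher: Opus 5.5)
Your proposal is correct, and it contains two proofs. The first is essentially the paper's. The paper reads off from Lemma~\ref{lem:X-structure} that the spectrum of $\mathbf X$ lies in $\{0\}\cup\{\mu_i\lambda_i\}$ and concludes $\rho(\mathbf X)\le 1/4<1$, so the Neumann series converges. It then sums the geometric tail $\sum_{k\ge2}(\mu_i\lambda_i)^k$ exactly as you do. You justify convergence instead by the telescoping identity and the explicit formula $\mathbf X^{K+1}=\sum_i(\mu_i\lambda_i)^{K+1}\mathbf E_i$, which is equivalent and slightly more self-contained.

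Your preferred second argument, checking directly that $(\mathbf I-\mathbf X)\mathbf R=\mathbf I$, is a genuinely different route. It uses only $\mathbf X^2=\sum_i(\mu_i\lambda_i)^2\mathbf E_i$ and $\mathbf X\mathbf E_i=\mu_i\lambda_i\mathbf E_i$; the latter comes from part (i) and the former from part (ii), not both from part (i) as you wrote. It involves no limits, so the formula holds as an algebraic identity whenever $1-\mu_i\lambda_i\neq 0$, even outside the regime where the Neumann series converges. What it does not do is explain where $\mathbf R$ comes from. The Neumann-series viewpoint produces the formula, and it is also the viewpoint the paper reuses later: there, invertibility of $\mathfrak A_k=\mathbf I-\mathcal X_k$ is obtained from $\rho(\mathcal X_k)<1$, with no closed-form inverse available to verify.
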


\begin{proof}
By Lemma~\ref{lem:X-structure}, the spectrum of $\mathbf X$ consists of $0$ and $\mu_i\lambda_i$ for $i=1,\dots,N$. Since $\lambda_i=1-\mu_i$ with $\mu_i\in[0,1]$, we have
\[
0 \le \mu_i\lambda_i \le \frac14 < 1.
\]
Hence the spectral radius of $\mathbf X$ is strictly less than $1$, and therefore
\[
(\mathbf I-\mathbf X)^{-1}
= \sum_{k=0}^\infty \mathbf X^k.
\]

Using Lemma~\ref{lem:X-structure},
\[
\mathbf X^k=\sum_{i=1}^N (\mu_i\lambda_i)^k \mathbf E_i
\quad \text{for } k\ge 2,
\]
we obtain
\[
(\mathbf I-\mathbf X)^{-1}
=
\mathbf I + \mathbf X
+\sum_{i=1}^N \sum_{k=2}^\infty (\mu_i\lambda_i)^k \mathbf E_i,
\]
which yields \eqref{1148pm325}.
\end{proof}

Analogously, $(\mathbf I-\mathbf Y)$ is invertible and admits the same explicit form.

\subsubsection{Dynamics of the three-particle interaction}\label{dynamics3}
In this subsection, let us investigate the dynamics of the three-particle interaction. 
We consider configurations in which three particles occupy adjacent sites.
After eliminating all non-admissible spatial configurations using the results of the previous section, the master equation for
\(\mathbf{U}(x,x+1,x+2;t)\) takes the form
\begin{equation}\label{1149pm}
\begin{aligned}
\frac{d}{dt}\mathbf U(x,x+1,x+2;t)
&=
p\,\mathbf U(x-1,x+1,x+2;t)
+ q\,\mathbf U(x,x+1,x+3;t) \\[4pt]
&\quad
+ p\,(\mathbf B\otimes \mathbf I)\,\mathbf U(x-1,x,x+2;t)
+ q\,(\mathbf I\otimes \mathbf B')\,\mathbf U(x,x+2,x+3;t) \\[8pt]
&\quad
+ p\,(\mathbf B'\otimes \mathbf I)\,\mathbf U(x,x+1,x+2;t)
+ q\,(\mathbf I\otimes \mathbf B)\,\mathbf U(x,x+1,x+2;t) \\[4pt]
&\quad
+ p\,(\mathbf I-\mathbf X)^{-1}
\Big[
(\mathbf I\otimes \mathbf B)(\mathbf B\otimes \mathbf I)\,
\mathbf U(x-1,x,x+1;t)
\Big] \\[4pt]
&\quad
+ p\,(\mathbf I-\mathbf X)^{-1}
\Big[
(\mathbf I\otimes \mathbf B')\,\mathbf U(x,x+1,x+2;t)
\Big] \\[4pt]
&\quad
+ q\,(\mathbf I-\mathbf Y)^{-1}
\Big[
(\mathbf B\otimes \mathbf I)\,\mathbf U(x,x+1,x+2;t)
\Big] \\[4pt]
&\quad
+ q\,(\mathbf I-\mathbf Y)^{-1}
\Big[
(\mathbf B'\otimes \mathbf I)(\mathbf I\otimes \mathbf B')\,
\mathbf U(x+1,x+2,x+3;t)
\Big] \\[4pt]
&\quad
- 3\,\mathbf U(x,x+1,x+2;t).
\end{aligned}
\end{equation}
This expression shows that all contributions to the evolution of admissible configurations are generated by compositions of two-particle interaction operators, thereby making the two-particle reducibility of the dynamics explicit.

The transition rates are encoded in the coefficient matrices appearing in \eqref{1149pm}. For example, the rate of transition from the configuration \((x-1,x,x+1,\nu)\) to \((x,x+1,x+2,\pi)\) is given by
\[
p\,\langle \pi \mid (\mathbf I-\mathbf X)^{-1}
(\mathbf I \otimes \mathbf B)(\mathbf B \otimes \mathbf I)\mid \nu \rangle.
\]
Using Lemma~\ref{100pm326} and Proposition~\ref{1259pm}, we obtain, for instance,
\begin{equation}
p\langle iii \mid
(\mathbf I-\mathbf X)^{-1}
(\mathbf I \otimes \mathbf B)(\mathbf B \otimes \mathbf I)
\mid iii \rangle
=
\frac{p\mu_i^2}{1-\mu_i\lambda_i}.
\end{equation}
This is the rate at which the leftmost particle of species \(i\) at position \(x-1\) jumps over two adjacent particles of the same species and moves to \(x+2\).

On the other hand, when \(i<j\), we have
\begin{equation}
p\langle iji \mid
(\mathbf I-\mathbf X)^{-1}
(\mathbf I \otimes \mathbf B)(\mathbf B \otimes \mathbf I)
\mid iij \rangle
=
p\mu_i.
\end{equation}
This corresponds to the rate at which the leftmost particle of species \(i\) at position \(x-1\) jumps over two particles in front of it—one of species \(i\) and one of species \(j\)—and moves to position \(x+2\).

These examples illustrate that the transition rates depend nontrivially on the local species configuration, and in particular on the number and arrangement of particles of the same species within the local block.

\subsection{General $n$: Two-particle reducibility}\label{340pm412}
Let $\mathbf{U}(x_1,\dots,x_n;t)$ be the $N^n\times N^n$ matrix-valued function whose
$(\pi,\nu)$-entry gives the transition probability from the initial species configuration $\nu$ and spacial configuration $Y$ (omitted in the notation)
to the final species configuration $\pi$ and spatial configurations satisfying
$x_1<\cdots<x_n$ at time $t$. We extend $\mathbf{U}$ to all $(x_1,\dots,x_n)\in\mathbb Z^n$
via the boundary conditions introduced below.

The function $\mathbf{U}(x_1,\dots,x_n;t)$ satisfies the free evolution equation,
corresponding to independent particle motion,
\begin{equation}\label{1055pm325-n}
\frac{d}{dt}\mathbf U(x_1,\dots,x_n;t)
=
\sum_{i=1}^n \bigl( p\mathcal{T}_i^- + q\mathcal{T}_i^+ - 1 \bigr)\mathbf U(x_1,\dots,x_n;t),
\qquad (x_1,\dots,x_n)\in \mathbb Z^n,
\end{equation}
where $\mathcal{T}_i^{\pm}$ denotes the operator that shifts the $i$-th spatial coordinate by $\pm1$.

The effect of interactions is incorporated through boundary conditions, which ensure that the induced evolution on the physical region $x_1<\cdots<x_n$ coincides with that of the original interacting particle system. The boundary conditions are naturally extended from \eqref{212am324} as follows: for each
$j=1,\dots,n-1$,
\begin{eqnarray}\label{bdn}
&&\mathbf U(x_1,\dots,x_{j-1},x,x,x_{j+2},\dots,x_n;t) \notag\\[4pt]
&=&
\bigl( \mathbf I^{\otimes (j-1)} \otimes \mathbf B \otimes \mathbf I^{\otimes (n-j-1)} \bigr)
\,\mathbf U(x_1,\dots,x_{j-1},x-1,x,x_{j+2},\dots,x_n;t)
\notag\\[6pt]
&&\qquad
+
\bigl( \mathbf I^{\otimes (j-1)} \otimes \mathbf B' \otimes \mathbf I^{\otimes (n-j-1)} \bigr)
\,\mathbf U(x_1,\dots,x_{j-1},x,x+1,x_{j+2},\dots,x_n;t),
\end{eqnarray}
for all $x,x_1,\dots,x_{j-1},x_{j+2},\dots,x_n\in\mathbb Z$, where $\mathbf{I}$ denotes the $N\times N$ identity matrix.

Thus, the free evolution equation \eqref{1055pm325-n}, together with the boundary conditions \eqref{bdn}, determines the dynamics entirely on the physical region
\[
\{(x_1,\dots,x_n)\in\mathbb Z^n:\ x_1<\cdots<x_n\}.
\]
We now explain how non-admissible configurations arising in the master equation,
due to the action of the operators $\mathcal{T}_i^{\pm}$ on adjacent particles,
are eliminated using the boundary condition \eqref{bdn}. These cases will be treated separately below.

\subsubsection{Non-admissible configurations arising from the operators $\mathcal{T}_i^-$}\label{444am}
The operators $\mathcal{T}_i^-$ produce non-admissible configurations in which a consecutive block
terminates with a repeated pair, of the form
\begin{equation}\label{maxblock2}
(\,x_1,\dots,x_{j-1},x,x+1,\dots,\underbrace{x+m-1,x+m-1}_{\text{repeated pair}},x_{j+m+1},\dots,x_n\,),
\end{equation}
for some $m=1,\dots,n-1$. Such terms arising in \eqref{1055pm325-n} can be eliminated recursively
using the boundary conditions. By repeated application of the boundary condition \eqref{bdn}, a configuration of the form \eqref{maxblock2}
can be transformed into configurations of the form
\begin{align}\label{7090m330}
(&x_1,\dots,x_{j-1},x,\dots,x+i-2,\underbrace{x+i-1,x+i-1}_{\text{repeated pair}},x+i,\dots,x+m-1,x_{j+m+1},\dots,x_n;\,t),
\end{align}
for some $i \in \{1,\dots, m\}$. For each $i=1,\dots,m$, define
\begin{align*}
\mathbf{W}_{m,i}
:=
\mathbf U(&x_1,\dots,x_{j-1},x,\dots,x+i-2,\\
&x+i-1,x+i-1,x+i,\dots,x+m-1,x_{j+m+1},\dots,x_n;\,t),
\end{align*}
where the repeated pair $(x+i-1,x+i-1)$ occupies the $i$-th and $(i+1)$-th positions within the consecutive block.

We also define
\begin{align*}
\mathbf{W}_{m,0}
&:=
\mathbf U(x_1,\dots,x_{j-1},x-1,x,\dots,x+m-1,x_{j+m+1},\dots,x_n;\,t),\\
\mathbf{W}_{m,m+1}
&:=
\mathbf U(x_1,\dots,x_{j-1},x,x+1,\dots,x+m,x_{j+m+1},\dots,x_n;\,t),
\end{align*}
which correspond to admissible configurations without repeated pairs in the block.
\begin{lemma}
For $i=1,\dots,m$,
\begin{equation}
\mathbf{W}_{m,i}
=
\mathbf{\mathcal B}_i\,\mathbf{W}_{m,i-1}
+
\mathbf{\mathcal B}_i'\,\mathbf{W}_{m,i+1},
\label{eq:block-recurrence}
\end{equation}
where
\[
\mathbf{\mathcal B}_i
=
\mathbf I^{\otimes (j+i-2)}\otimes \mathbf B \otimes \mathbf I^{\otimes (n-j-i)},
\qquad
\mathbf{\mathcal B}_i'
=
\mathbf I^{\otimes (j+i-2)}\otimes \mathbf B' \otimes \mathbf I^{\otimes (n-j-i)},
\]
and $\mathbf{I}$ denotes the $N \times N$ identity matrix.
\end{lemma}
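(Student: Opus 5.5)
This is a direct specialization of the boundary condition \eqref{bdn}. For a fixed $i\in\{1,\dots,m\}$, I will apply \eqref{bdn} with the index $j$ there replaced by $j+i-1$, the position in the full $n$-tuple of the first member of the repeated pair. I take the repeated coordinate to be $x+i-1$ and keep all other coordinates as in $\mathbf W_{m,i}$. The left-hand side of \eqref{bdn} is then exactly $\mathbf W_{m,i}$, because the pair $(x+i-1,x+i-1)$ occupies positions $j+i-1$ and $j+i$ of the $n$-tuple.

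Next I identify the two configurations on the right-hand side of \eqref{bdn}. In the first, the pair $(x+i-1,x+i-1)$ is replaced by $(x+i-2,x+i-1)$. The block then reads $x,\dots,x+i-3,\,x+i-2,\,x+i-2,\,x+i-1,\dots,x+m-1$, so the repeated pair sits at block positions $i-1,i$, and this configuration is $\mathbf W_{m,i-1}$. In the second, the pair is replaced by $(x+i-1,x+i)$. The block then reads $x,\dots,x+i-1,\,x+i,\,x+i,\dots,x+m-1$, with the repeated pair at block positions $i+1,i+2$, and this configuration is $\mathbf W_{m,i+1}$.

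The endpoints need a separate check against the special definitions of $\mathbf W_{m,0}$ and $\mathbf W_{m,m+1}$. For $i=1$, the first term gives the block $x-1,x,x+1,\dots,x+m-1$, which is $\mathbf W_{m,0}$ as defined. For $i=m$, the pair $(x+m-1,x+m-1)$ is the last pair of the block, and the second term gives $x,\dots,x+m-1,x+m$, which is $\mathbf W_{m,m+1}$.

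Finally I match the operators. In \eqref{bdn} with index $j+i-1$, the matrix $\mathbf B$ is preceded by $\mathbf I^{\otimes(j+i-2)}$ and followed by $\mathbf I^{\otimes(n-(j+i-1)-1)}=\mathbf I^{\otimes(n-j-i)}$. These are precisely $\mathcal B_i$ and $\mathcal B_i'$, which gives \eqref{eq:block-recurrence}.

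No real obstacle is expected, since \eqref{bdn} holds for all integer coordinates, admissible or not. The only care needed is in the index bookkeeping: block position versus global position, and the endpoint identifications for $i=1$ and $i=m$.
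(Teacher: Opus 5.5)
Your proposal is correct and follows the paper's own proof: apply \eqref{bdn} to the repeated pair at global positions $(j+i-1,j+i)$ and identify the two resulting configurations as $\mathbf W_{m,i-1}$ and $\mathbf W_{m,i+1}$. Your index bookkeeping, including the endpoint cases $i=1$ (yielding $\mathbf W_{m,0}$) and $i=m$ (yielding $\mathbf W_{m,m+1}$), simply spells out what the paper leaves implicit.
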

\begin{proof}
This follows directly from the boundary condition \eqref{bdn}, applied to the repeated pair
at positions $(j+i-1,j+i)$.
\end{proof}
Equation \eqref{eq:block-recurrence} defines a linear system for $\mathbf{W}_{m,i}$, $i=1,\dots,m$,
which can be solved recursively. By eliminating the intermediate vectors
$\mathbf{W}_{m,1},\dots,\mathbf{W}_{m,m}$, each $\mathbf{W}_{m,i}$ can be expressed in terms of
$\mathbf{W}_{m,0}$ and $\mathbf{W}_{m,m+1}$ as
\begin{equation}
\mathbf{W}_{m,i}
=
\mathbf{\mathcal L}_i\,\mathbf{W}_{m,0}
+
\mathbf{\mathcal L}_i'\,\mathbf{W}_{m,m+1},
\label{eq:block-recurrence1}
\end{equation}
for suitable matrices $\mathbf{\mathcal L}_i$ and $\mathbf{\mathcal L}_i'$, provided that certain invertibility conditions hold.

To illustrate the procedure, consider the case $m=2$, for which there are two non-admissible terms $\mathbf{W}_{2,1}$ and $\mathbf{W}_{2,2}$. Then
\[
\mathbf{W}_{2,2}
=
\mathbf{\mathcal B}_2\,\mathbf{W}_{2,1}
+
\mathbf{\mathcal B}_2'\,\mathbf{W}_{2,3}
=
\mathbf{\mathcal B}_2\bigl(
\mathbf{\mathcal B}_1\,\mathbf{W}_{2,0}
+
\mathbf{\mathcal B}_1'\,\mathbf{W}_{2,2}
\bigr)
+
\mathbf{\mathcal B}_2'\,\mathbf{W}_{2,3},
\]
which yields
\[
\bigl(\mathbf I - \mathbf{\mathcal B}_2 \mathbf{\mathcal B}_1'\bigr)\mathbf{W}_{2,2}
=
\mathbf{\mathcal B}_2 \mathbf{\mathcal B}_1\,\mathbf{W}_{2,0}
+
\mathbf{\mathcal B}_2'\,\mathbf{W}_{2,3}.
\]
Assuming that $\mathbf I - \mathbf{\mathcal B}_2 \mathbf{\mathcal B}_1'$ is invertible, we obtain
\[
\mathbf{W}_{2,2}
=
\bigl(\mathbf I - \mathbf{\mathcal B}_2 \mathbf{\mathcal B}_1'\bigr)^{-1}
\mathbf{\mathcal B}_2 \mathbf{\mathcal B}_1\,\mathbf{W}_{2,0}
+
\bigl(\mathbf I - \mathbf{\mathcal B}_2 \mathbf{\mathcal B}_1'\bigr)^{-1}
\mathbf{\mathcal B}_2'\,\mathbf{W}_{2,3}.
\]
Substituting this into
\[
\mathbf{W}_{2,1}
=
\mathbf{\mathcal B}_1\,\mathbf{W}_{2,0}
+
\mathbf{\mathcal B}_1'\,\mathbf{W}_{2,2},
\]
we obtain expressions for both $\mathbf{W}_{2,1}$ and $\mathbf{W}_{2,2}$ in terms of
$\mathbf{W}_{2,0}$ and $\mathbf{W}_{2,3}$. This coincides with the case analyzed in Section~\ref{302am1} and illustrates the general elimination mechanism.

For general $m$, iterating this procedure leads to the recursion
\begin{equation}\label{1122am331}
\mathfrak A_0=\mathbf{I},\qquad
\mathfrak A_k=\mathbf{I}-\mathbf{\mathcal B}_{k+1}\mathfrak A_{k-1}^{-1}\mathbf{\mathcal B}'_k,
\qquad (k\ge1),
\end{equation}
provided that the operators $\mathfrak A_1,\dots,\mathfrak A_{k-1}$ are invertible.
Under this condition, the reduction \eqref{eq:block-recurrence1} holds.
\begin{remark}
In the homogeneous case $\mu_i = 1$ for all $i$, the invertibility of $\mathfrak A_k$ for all $k$ was established in \cite{Lee2026}, together with an explicit formula for its inverse.

In the general case, proving invertibility of the $N^n \times N^n$ matrix $\mathfrak A_k$ appears to be highly nontrivial. Although $\mathfrak A_k$ becomes block-diagonal after reordering the basis according to species compositions, establishing invertibility of each block remains difficult.

Numerical evidence  supports invertibility in general. In particular, for small systems (up to $N=n=4$), we computed the spectral radius of
\[
\mathbf{\mathcal B}_{k+1}\mathfrak A_{k-1}^{-1}\mathbf{\mathcal B}'_k
\]
for all $k$, and found it to be strictly less than $1$. This implies invertibility of $\mathfrak A_k$ in these cases.

For general $N$ and $n$, invertibility can be proved in the following settings:
(i) the binary case $\mu_i \in \{0,1\}$ for arbitrary special compositions, and
(ii) the continuous case $\mu_i \in (0,1)$ for certain special species compositions.

These results lead us to conjecture that $\mathfrak A_k$ is invertible for all parameters $\mu_i \in [0,1]$.
\end{remark}

\paragraph{Invertibility of $\mathfrak A_k$ for the binary-parameter setting $\mu_i \in \{0,1\}$}
In the  case where each parameter satisfies $\mu_i \in \{0,1\}$, one has
$\mu_i \lambda_i = 0$ for all $i$, since $\lambda_i = 1 - \mu_i$.
The following result extends Corollary~3.9 of \cite{Lee2026} to this setting.
\begin{corollary}\label{1126am331}
Suppose that $\mu_i \in \{0,1\}$ for each $i$. Then $\mathfrak A_k$ defined in \eqref{1122am331}
is invertible for all $k \ge 1$, and
\[
\mathfrak A_k^{-1}
=
\mathbf{I}
+
\mathbf{\mathcal B}_{k+1}\mathfrak A_{k-1}^{-1}\mathbf{\mathcal B}'_k.
\]
\end{corollary}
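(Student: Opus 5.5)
The claimed formula $\mathfrak A_k^{-1}=\mathbf I+\mathbf Z_k$, with $\mathbf Z_k:=\mathcal B_{k+1}\mathfrak A_{k-1}^{-1}\mathcal B'_k$, is exactly the Neumann series $\sum_{r\ge0}\mathbf Z_k^r$ truncated after the linear term. So it suffices to prove, by induction on $k$, that
\[
\mathbf Z_k^2=0 .
\]
Then $(\mathbf I-\mathbf Z_k)(\mathbf I+\mathbf Z_k)=\mathbf I-\mathbf Z_k^2=\mathbf I$, which gives both invertibility and the formula. This is the same mechanism as Corollary~3.9 of \cite{Lee2026}. The only new inputs are $\mu_i\lambda_i=0$ for every species $i$, together with the binary diagonal entries of $\mathbf B$ and $\mathbf B'$.

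\textbf{Base case $k=1$.} Here $\mathbf Z_1=\mathcal B_2\mathcal B_1'$, which is the operator $\mathbf X$ of Lemma~\ref{lem:X-structure} acting on the relevant three consecutive tensor factors. By Lemma~\ref{lem:X-structure}(ii) with $k=2$, we have $\mathbf X^2=\sum_i(\mu_i\lambda_i)^2\mathbf E_i=0$.

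\textbf{Inductive step.} Assume $\mathfrak A_{j}^{-1}=\mathbf I+\mathbf Z_j$ with $\mathbf Z_j^2=0$ for all $j<k$. Unwinding the recursion expresses $\mathfrak A_{k-1}^{-1}$ as a finite sum of products of the local operators $\mathcal B_{i+1}$ and $\mathcal B'_i$ with $i\le k-1$. Consequently $\mathbf Z_k$ is a finite sum of words of the form
\[
\mathcal B_{k+1}\,(\text{products of }\mathcal B_{i+1}\mathcal B_i'\text{-type factors})\,\mathcal B_k' .
\]
To show $\mathbf Z_k^2=0$, I will track basis vectors $|\nu_1\cdots\nu_n\rangle$ and find an invariant of the image. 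First, every vector in the image of $\mathcal B_{k+1}$ has positions $(k+1,k+2)$ either strictly increasing or equal with value $i$ satisfying $\mu_i=1$. Second, $\mathcal B'_k$ annihilates any pair at positions $(k,k+1)$ that is strictly increasing or equal with $\lambda_i=0$. The goal is to show that the intermediate factors in $\mathfrak A_{k-1}^{-1}$ transport this ``increasing-or-$\mu$-type'' pattern leftward in a way that guarantees the next application of $\mathbf Z_k$ meets an annihilated pair.

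A cleaner way to organise this is to imitate the proof of Lemma~\ref{lem:X-structure}: read off, from the entry description \eqref{138am42}, the explicit combinatorial action of $\mathbf Z_k$ on basis vectors. In the binary case every nonzero entry of $\mathbf B$ and $\mathbf B'$ is $0$ or $1$, so $\mathbf Z_k$ maps each basis vector to a $0/1$ combination of permuted words. I expect $\mathbf Z_k$ to act as a cyclic rotation of the block $\nu_1\cdots\nu_{k+2}$ (relative to the block's starting position), nonzero only under order conditions on the species. Here a same-species comparison counts as ``$<$'' when $\mu=1$ and as ``$>$'' when $\mu=0$; this is the non-strict analogue of case (iv) of Lemma~\ref{lem:X-structure}(i). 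The binary assumption is exactly what makes ties behave as strict inequalities with a species-dependent orientation. Every same-species vertex is then deterministic, so the $\mathbf E_i$ fixed points disappear. Given such a description, $\mathbf Z_k^2=0$ follows because the output of $\mathbf Z_k$ always violates the input order condition required for $\mathbf Z_k$ to be nonzero.

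\textbf{Main obstacle.} The hard step is making this combinatorial description rigorous for general $k$: the ``effective tie order'' differs from species to species, and the inductive formula $\mathfrak A_{k-1}^{-1}=\mathbf I+\mathbf Z_{k-1}$ must be fed back correctly. I would first verify it directly for $k=2$ by computation, and then prove the general rotation description by induction alongside $\mathbf Z_k^2=0$. The inductive step uses $\mathbf Z_k=\mathcal B_{k+1}\mathcal B'_k+\mathcal B_{k+1}\mathbf Z_{k-1}\mathcal B'_k$, together with the fact that $\mathcal B_{k+1}$ commutes with $\mathcal B'_i$ for $i\le k-1$, since these act on disjoint tensor factors.
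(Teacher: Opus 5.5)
\textbf{Verdict: genuine gap.} Your reduction to $\mathbf Z_k^2=\mathbf 0$ and the base case via Lemma~\ref{lem:X-structure}(ii) are correct and match the paper's skeleton. The inductive step, however, is the entire content, and you have not carried it out; the mechanism you propose for it is false for $k\ge2$. Unwinding $\mathbf Z_k=\mathcal B_{k+1}(\mathbf I+\mathbf Z_{k-1})\mathcal B_k'$ gives $\mathbf Z_k=\sum_{m=1}^k P_m$ with $P_m=\mathcal B_{k+1}\mathcal B_k\cdots\mathcal B_{m+1}\mathcal B_m'\mathcal B_{m+1}'\cdots\mathcal B_k'$. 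These words act by \emph{different} permutations, and several can be nonzero on the same basis vector. For distinct species, $\mathbf Z_2|2314\rangle=|2143\rangle+|1342\rangle$, and neither output is a rotation of $2314$. So there is no single ``rotation with an order condition'' whose output violates its own input condition; you must also kill the cross terms $P_{m'}P_m$ with $m'\neq m$. Your first heuristic is also misstated: the image of $\mathcal B_{k+1}$ has positions $(k+1,k+2)$ decreasing (or a tie with $\mu_i=1$), not increasing. The paper handles the step by invoking the induction of Corollary~3.9 of \cite{Lee2026}. It also names the ingredient that survives when $\mu_i\lambda_i=0$, namely $(\mathcal B_{k+1}\cdots\mathcal B_2\mathcal B_1'\cdots\mathcal B_k')^2=\mathbf 0$, i.e.\ $P_1^2=\mathbf 0$ (its shifts give $P_m^2=\mathbf 0$). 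The cross terms are left inside the cited argument.

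You can close the gap directly. Write $a\prec b$ iff $a<b$, or $a=b$ with $\mu_a=1$. Binary parameters mean $\mathbf B|ab\rangle=|ba\rangle$ if $a\prec b$ and $0$ otherwise, and $\mathbf B'|ab\rangle=|ba\rangle$ if $a\not\prec b$ and $0$ otherwise. Tracking letters, $P_m|a_1\cdots a_{k+2}\rangle$ is the $3$-cycle placing $a_{k+1},a_{k+2},a_m$ at positions $m,k+1,k+2$, with all other letters fixed. This holds when $a_r\not\prec a_{k+1}$ for $m\le r\le k$ and $a_m\prec a_r$ for $r\in\{m+1,\dots,k,k+2\}$; otherwise $P_m|a\rangle=0$.
\begin{itemize}
\item For $m'\le m$, $P_{m'}P_m|a\rangle\neq 0$ would need $a_m\not\prec a_{k+1}$, $a_m\prec a_{k+2}$ and $a_{k+1}\not\prec a_{k+2}$. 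These force $a_{k+1}\le a_m\le a_{k+2}\le a_{k+1}$, a triple tie at a species $s$ that would require both $\mu_s=0$ and $\mu_s=1$.
\item For $m'>m$, it would need $a_m\prec a_{m'}$ and $a_{m'}\prec a_m$, so $a_m=a_{m'}$. It would also need $a_m\prec a_{k+2}$ and $a_{m'}\not\prec a_{k+2}$, which is then contradictory.
\end{itemize}
Hence $\mathbf Z_k^2=\mathbf 0$, and $\mathbf Z_{k-1}^2=\mathbf 0$ is what justified $\mathfrak A_{k-1}^{-1}=\mathbf I+\mathbf Z_{k-1}$ in the unwinding. The hypothesis $\mu_i\lambda_i=0$ enters exactly in excluding the triple tie.
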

\begin{proof}
Let
\[
\mathcal{X}_k:=\mathbf{\mathcal B}_{k+1}\mathfrak A_{k-1}^{-1}\mathbf{\mathcal B}_k'.
\]
Then $\mathfrak A_k=\mathbf I - \mathcal{X}_k.$ It therefore suffices to show that \(\mathcal{X}_k^2=\mathbf{0}\). In the binary-parameter setting \(\mu_i\in\{0,1\}\), we have \(\mu_i\lambda_i=0\) for all \(i\).
As a consequence, the local nilpotency property established in \cite{Lee2026}
for the operators \(\mathbf{\mathcal B}_i\) and \(\mathbf{\mathcal B}_i'\) remains valid.
In particular, the key relation used in \cite{Lee2026},
\[
(\mathbf{\mathcal B}_{k+1}\cdots \mathbf{\mathcal B}_{2}\mathbf{\mathcal B}_{1}'\cdots \mathbf{\mathcal B}_k')^2 = \mathbf{0},
\]
continues to hold. The same inductive argument as in Corollary~3.9 of \cite{Lee2026} therefore applies,
and yields
\[
\mathcal{X}_k^2
=
\bigl(\mathbf{\mathcal B}_{k+1}\mathfrak A_{k-1}^{-1}\mathbf{\mathcal B}_k'\bigr)^2
=
\mathbf{0}.
\]
Hence,
\begin{equation*}
\mathfrak A_k^{-1}
=
\mathbf I + \mathcal{X}_k
=
\mathbf I + \mathbf{\mathcal B}_{k+1}\mathfrak A_{k-1}^{-1}\mathbf{\mathcal B}_k'.
\end{equation*}
\end{proof}
\paragraph{Invertibility of $\mathfrak A_k$ for the continuous-parameter setting $\mu_i\in (0,1)$ on certain invariant subspaces.}\label{412ama}
Let $M=[\nu_1,\dots,\nu_n]$ be a multiset describing the species of $n$ particles, and let $V_M$ be the vector space spanned by all permutations of $M$. We establish invertibility of $\mathfrak A_k$ on the invariant subspaces corresponding to the following multisets:
\begin{itemize}
\item[(a)] $[i,\dots,i]$,
\item[(b)] $[i,j,\dots,j]$ with $i\neq j$, and
\item[(c)] $[\nu_1,\dots,\nu_n]$ with all $\nu_\ell$ distinct.
\end{itemize}
Since both $\mathcal B_r$ and $\mathcal B_r'$ preserve the multiset of species, the subspace $V_M$ is invariant under $\mathfrak A_k$ whenever $\mathfrak A_{k-1}$ is well defined.
\begin{itemize}
\item[(a)] (Invertibility on $V_{[i,\dots,i]}$) Since $V_{[i,\dots,i]}$ is one-dimensional, the operator $\mathfrak A_k$ acts as a scalar:
\[
\mathfrak A_k |i\cdots i\rangle = a_{k,i}\,|i\cdots i\rangle.
\]
Thus,
\begin{equation}\label{1231am}
a_{0,i}=1,
\qquad
a_{k,i}=1-\frac{\mu_i\lambda_i}{a_{k-1,i}},
\quad (k\ge1),
\end{equation}
whenever $a_{k-1,i}\neq 0$. Set $\alpha_i:=\mu_i\lambda_i$, so that $0\le \alpha_i\le \frac{1}{4}$, and let
\[
r_i:=\frac{1+\sqrt{1-4\alpha_i}}{2}.
\]
Then $r_i\in[1/2,1]$ is the larger root of $r=1-\alpha_i/r$. A simple induction shows that
\[
a_{k,i} \ge r_i > 0
\qquad \text{for all } k\ge 0,
\]
and hence the recursion is well defined at every step. Therefore, $\mathfrak A_k$ is invertible on $V_{[i,\dots,i]}$.
\item [(b)] (Invertibility of $\mathfrak A_k$ on $V_{[i,j,\dots,j]}$ where $i \neq j$)  Recall that throughout the argument below, $\mathbf{I}$ denotes the identity matrix of the appropriate dimension in the context. Let
\begin{equation}\label{414am}
\mathcal X_k := \mathbf{\mathcal B}_{k+1}\mathfrak A_{k-1}^{-1}\mathbf{\mathcal B}'_k,
\end{equation}
so that $\mathfrak A_k = \mathbf{I} - \mathcal X_k$. Since the operators in \eqref{414am} preserve the multiset of species, they decompose into blocks corresponding to species compositions. We restrict to the block corresponding to $V_{[i,j,\dots,j]}$, which is $n$-dimensional.

We show that
\[
\rho(\mathcal X_k) < 1 \quad \text{for all } k,
\]
which implies that $\mathfrak A_k$ is invertible. Without loss of generality, assume $i=2$ and $j=1$. For $r=0,\dots,n-1$, define
\[
e_r := |1^r 2 1^{n-r-1}\rangle,
\]
so that $\{e_0,\dots,e_{n-1}\}$ forms a basis of $V_{[2,1,\dots,1]}$. We begin by showing that $\mathfrak A_1$ and $\mathfrak A_2$ are invertible, and then extend this argument to the general case.  

The operators $\mathbf{\mathcal B}_i$ and $\mathbf{\mathcal B}'_i$ act locally by swapping adjacent particles with weights determined by $\mathbf B$ and $\mathbf B'$. A direct computation shows that
\[
\mathbf{\mathcal B}_1' e_0 = e_1,\quad
\mathbf{\mathcal B}_1' e_1 = 0,\quad
\mathbf{\mathcal B}_1' e_r = \lambda_1 e_r \quad (r\ge 2),
\]
and hence $\mathcal X_1 = \mathbf{\mathcal B}_2 \mathbf{\mathcal B}_1'$
acts as
\[
\mathcal X_1 e_0 = 0,\quad
\mathcal X_1 e_1 = 0,\quad
\mathcal X_1 e_2 = \lambda_1 e_1,\quad
\mathcal X_1 e_r = \mu_1\lambda_1 e_r \quad (r\ge 3).
\]

Thus, $\mathcal X_1$ has a block upper-triangular form with a nilpotent  block and a diagonal block with the diagonal entry $\mu_1\lambda_1$. In particular,
\[
\operatorname{Spec}(\mathcal X_1) \subset \{0,\,\mu_1\lambda_1\},
\qquad
\rho(\mathcal X_1) = \mu_1\lambda_1 < 1.
\]
Hence $\mathfrak A_1 = \mathbf I - \mathcal X_1$ is invertible.

Expanding the inverse of $\mathfrak A_1$, we obtain
\begin{equation}\label{1116pm}
\mathcal X_2
=
\mathbf{\mathcal B}_{3}(\mathbf{I} - \mathbf{\mathcal B}_{2}\mathbf{\mathcal B}'_{1})^{-1} \mathbf{\mathcal B}'_2
=
\mathbf{\mathcal B}_{3}\sum_{m\ge0}(\mathbf{\mathcal B}_{2}\mathbf{\mathcal B}'_{1})^m \mathbf{\mathcal B}'_2.
\end{equation}
Acting term-by-term on the basis vectors, a direct computation shows that $\mathcal X_2$ again has a block upper-triangular structure, with a nilpotent block and  a diagonal block with diagonal entries
\[
\mu_1\lambda_1
\quad \text{and} \quad
\frac{\mu_1\lambda_1}{1-\mu_1\lambda_1}.
\]
Since $\mu_1\lambda_1 \le \tfrac14$, all eigenvalues are strictly less than $1$, and hence
\[
\rho(\mathcal X_2) < 1.
\]
Therefore, $\mathfrak A_2 = \mathbf I - \mathcal X_2$ is invertible.

These observations extend to general $k$, as formalized in the following lemma.

\begin{lemma}\label{lem:block-form-Xk}
For each $k\ge 1$, the matrix of $\mathcal X_k$ with respect to the ordered basis
\[
(e_0,\dots,e_{k-1}\mid e_k,e_{k+1}\mid e_{k+2},\dots,e_{n-1})
\]
has the block diagonal  form
\[
\mathcal X_k=
\left(
\begin{array}{c|cc|c}
D_k^- & 0 & 0 & 0\\
\hline
0 & 0 & * & 0\\
0 & 0 & 0 & 0\\
\hline
0 & 0 & 0 & D_{n-k-2}^+
\end{array}
\right),
\]
where $D_k^-$ and $D_{n-k-2}^+$ are diagonal matrices of dimensions $k$ and $n-k-2$, respectively. In particular, the middle $2\times2$ block is nilpotent.
\end{lemma}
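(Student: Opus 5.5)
The plan is to prove Lemma~\ref{lem:block-form-Xk} by induction on $k$. I will prove a slightly stronger statement that records the actual entries, because the induction step needs them. First I fix the local action of $\mathcal B_r$ and $\mathcal B_r'$ on the basis $e_s=|1^s21^{n-s-1}\rangle$, whose $2$ sits in position $s+1$. From \eqref{138am42}, $\mathbf B|ab\rangle$ equals $|ba\rangle$ if $a<b$, $\mu_a|aa\rangle$ if $a=b$, and $0$ if $a>b$; $\mathbf B'$ is the same with the inequalities reversed and $\lambda_a$ in place of $\mu_a$. Applying these to positions $r,r+1$ gives
\[
\mathcal B_r e_r=e_{r-1},\quad \mathcal B_r e_{r-1}=0,\quad \mathcal B_r e_s=\mu_1 e_s\ (s\ne r-1,r),
\]
\[
\mathcal B_r' e_{r-1}=e_r,\quad \mathcal B_r' e_r=0,\quad \mathcal B_r' e_s=\lambda_1 e_s\ (s\ne r-1,r).
\]
These reproduce the formulas already computed for $\mathcal B_1'$ and $\mathcal X_1$.

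\emph{Induction hypothesis} $(H_k)$: there are scalars $d^-_{k,s}$ for $s<k$, $c_k$, and $d^+_k$, all in $[0,1)$, such that
\[
\mathcal X_k e_s=d^-_{k,s}e_s\ (s\le k-1),\quad \mathcal X_k e_k=0,\quad \mathcal X_k e_{k+1}=c_k e_k,\quad \mathcal X_k e_s=d^+_k e_s\ (s\ge k+2).
\]
For $k=1$ this is the computation of $\mathcal X_1$ already given: $d^-_{1,0}=0$, $c_1=\lambda_1$, and $d^+_1=\mu_1\lambda_1$. Given $(H_{k-1})$, the operator $\mathfrak A_{k-1}=\mathbf I-\mathcal X_{k-1}$ is invertible and has the same shape. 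It acts as the scalar $(1-d)^{-1}$ on each diagonal vector, and on the nilpotent middle block its inverse is $\mathbf I+\mathcal X_{k-1}$, so $\mathfrak A_{k-1}^{-1}e_k=e_k+c_{k-1}e_{k-1}$.

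For the step I compute $\mathcal X_k e_s=\mathcal B_{k+1}\mathfrak A_{k-1}^{-1}\mathcal B_k' e_s$ case by case.
\begin{itemize}
\item[$\bullet$] $s\le k-2$ or $s\ge k+2$: $\mathcal B_k'$ gives $\lambda_1e_s$. Then $\mathfrak A_{k-1}^{-1}$ acts diagonally, because $s\le k-2$ lies in the $D^-$ block of $\mathcal X_{k-1}$ and $s\ge k+2$ lies in its $D^+$ block. Finally $\mathcal B_{k+1}$ gives $\mu_1$. In particular
\[
d^+_k=\frac{\mu_1\lambda_1}{1-d^+_{k-1}} \quad\text{and}\quad d^-_{k,s}=\frac{\mu_1\lambda_1}{1-d^-_{k-1,s}}.
\]
\item[$\bullet$] $s=k-1$: $\mathcal B_k'e_{k-1}=e_k$, then $\mathfrak A_{k-1}^{-1}e_k=e_k+c_{k-1}e_{k-1}$. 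Since $\mathcal B_{k+1}e_k=0$ and $\mathcal B_{k+1}e_{k-1}=\mu_1e_{k-1}$, we get $\mathcal X_k e_{k-1}=\mu_1c_{k-1}e_{k-1}$. This is diagonal, which is the key point: the off-diagonal entry of the old middle block is exactly killed by $\mathcal B_{k+1}$.
\item[$\bullet$] $s=k$: $\mathcal B_k'e_k=0$, so $\mathcal X_k e_k=0$.
\item[$\bullet$] $s=k+1$: $e_{k+1}$ is in the $D^+$ block of $\mathcal X_{k-1}$. Hence $\mathcal X_k e_{k+1}=\mathcal B_{k+1}\lambda_1(1-d^+_{k-1})^{-1}e_{k+1}=c_ke_k$ with $c_k=\lambda_1/(1-d^+_{k-1})$.
\end{itemize}
This yields the block form claimed in Lemma~\ref{lem:block-form-Xk}.

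The expected obstacle is keeping all scalars strictly below $1$, so that $\mathfrak A_k$ stays invertible and the induction can continue. For $d^+_k$, the recursion is exactly that of case (a): $1-d^+_k=a_{k,1}\ge r_1>0$, hence $d^+_k\le 1-r_1<1$. The entries $d^-_{k,s}$ are created as $\mu_1c_{k-1}=\mu_1\lambda_1/(1-d^+_{k-2})$, which is again a term of the same sequence. They then evolve by the same map $d\mapsto \mu_1\lambda_1/(1-d)$, which sends $[0,1-r_1]$ into itself because $1-r_1$ is its fixed point and $\mu_1\lambda_1\le\frac14$. Hence all diagonal entries lie in $[0,1-r_1]$. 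The entry $c_k$ is arbitrary, but it sits in a nilpotent block and therefore does not affect invertibility.
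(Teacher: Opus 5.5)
Your proof is correct and is essentially the paper's argument: induction on $k$, evaluating $\mathcal X_k=\mathcal B_{k+1}\mathfrak A_{k-1}^{-1}\mathcal B_k'$ separately on $\operatorname{span}\{e_0,\dots,e_{k-1}\}$, $\operatorname{span}\{e_k,e_{k+1}\}$ and $\operatorname{span}\{e_{k+2},\dots,e_{n-1}\}$, using the local actions of $\mathcal B_r,\mathcal B_r'$ and the block form of $\mathfrak A_{k-1}^{-1}$. Your write-up is more careful than the paper's in two respects. First, the paper's case (i) claims $\mathcal B_k'e_r\in\operatorname{span}\{e_r\}$ for every $r\le k-1$, but $\mathcal B_k'e_{k-1}=e_k$; your computation $\mathcal B_{k+1}\bigl(e_k+c_{k-1}e_{k-1}\bigr)=\mu_1c_{k-1}e_{k-1}$ handles that boundary case correctly. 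Second, you carry the bounds on the diagonal entries through the induction, so the invertibility of $\mathfrak A_{k-1}$ used in the step is actually justified (listing $c_k$ among the quantities in $[0,1)$ is unnecessary, though it is true for $\mu_1>0$).
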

\begin{proof}
The proof is given in Appendix~\ref{412129am}.
\end{proof}

\begin{lemma}\label{1203ama}
For \(1\le k\le n-2\), let
\[
U_k^-:=\operatorname{span}\{e_0,\dots,e_k\}.
\]
Then \(\mathcal X_{k+1}\) acts on \(U_k^-\) as a scalar:
\[
\mathcal X_{k+1}|_{U_k^-}=c_k \mathbf{I},
\]
where \(c_k\) is the scalar such that
\[
\mathcal X_k|1\cdots1\rangle=c_k|1\cdots1\rangle.
\]
\end{lemma}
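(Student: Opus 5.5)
The plan is an induction on $k$, using two facts. First, $\mathcal X_k=\mathcal B_{k+1}\mathfrak A_{k-1}^{-1}\mathcal B_k'$ only involves $\mathcal B_1,\dots,\mathcal B_{k+1}$ and $\mathcal B'_1,\dots,\mathcal B'_k$. Hence it acts nontrivially only on the first $k+1$ tensor factors and is the identity on the remaining ones. Second, on the single-species line, the recursion \eqref{1231am} gives
\[
\mathcal X_k|1\cdots1\rangle=c_k|1\cdots1\rangle,\qquad c_0:=0,\qquad c_k=\frac{\mu_1\lambda_1}{1-c_{k-1}}=1-a_{k,1}.
\]
Combining these, whenever a word has only $1$'s in positions $1,\dots,k+1$, we get $\mathcal X_k e_r=c_k e_r$, so $\mathfrak A_k^{-1}e_r=(1-c_k)^{-1}e_r$.

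The induction statement is that $\mathcal X_{k}|_{U_{k-1}^-}=c_{k-1}\mathbf I$. For the base case $k=1$, $U_0^-=\operatorname{span}\{e_0\}$ and $\mathcal X_1e_0=0=c_0 e_0$, which was computed above. For the step, assume $\mathcal X_k e_r=c_{k-1}e_r$ for $r\le k-1$ and compute $\mathcal X_{k+1}e_r=\mathcal B_{k+2}\mathfrak A_k^{-1}\mathcal B'_{k+1}e_r$ for $r\le k$. There are two cases.

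\emph{Case $r\le k-1$.} The $2$ sits at position $r+1\le k$, so positions $k+1,k+2,k+3$ all carry $1$. By \eqref{138am42}, $\mathcal B'_{k+1}e_r=\lambda_1e_r$. By the induction hypothesis, $\mathfrak A_k^{-1}e_r=(1-c_{k-1})^{-1}e_r$. Finally $\mathcal B_{k+2}e_r=\mu_1e_r$. Together these give
\[
\mathcal X_{k+1}e_r=\frac{\mu_1\lambda_1}{1-c_{k-1}}\,e_r=c_k e_r.
\]

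\emph{Case $r=k$.} Positions $k+1,k+2$ read $21$, so $\mathcal B'_{k+1}e_k=e_{k+1}$ (a swap, since $\nu_1>\nu_2$). The $(e_k,e_{k+1})$ middle block of Lemma~\ref{lem:block-form-Xk} is nilpotent with a single entry, say $\mathcal X_ke_{k+1}=\gamma e_k$ and $\mathcal X_ke_k=0$. It follows that $\mathfrak A_k^{-1}e_{k+1}=e_{k+1}+\gamma e_k$. Next apply $\mathcal B_{k+2}$, which acts on positions $k+2,k+3$. On $e_{k+1}$ these positions read $21$, which $\mathbf B$ kills. On $e_k$ they read $11$, giving the factor $\mu_1$. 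Hence $\mathcal X_{k+1}e_k=\mu_1\gamma\, e_k$.

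It remains to identify $\gamma$, which I expect to be the main computational point. Write $\mathcal X_ke_{k+1}=\mathcal B_{k+1}\mathfrak A_{k-1}^{-1}\mathcal B'_ke_{k+1}$. In $e_{k+1}$ the $2$ is at position $k+2$, so positions $k,k+1$ carry $11$ and $\mathcal B'_ke_{k+1}=\lambda_1e_{k+1}$. The word $e_{k+1}$ has only $1$'s in positions $1,\dots,k$, the support of $\mathcal X_{k-1}$. By the locality fact, $\mathfrak A_{k-1}^{-1}e_{k+1}=(1-c_{k-1})^{-1}e_{k+1}$; this also identifies this $D^+_{n-k-1}$ diagonal entry of $\mathcal X_{k-1}$ as $c_{k-1}$. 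Then $\mathcal B_{k+1}$ acts on positions $k+1,k+2$, which read $12$, and swaps them to produce $e_k$. Hence
\[
\gamma=\frac{\lambda_1}{1-c_{k-1}},\qquad \mathcal X_{k+1}e_k=\frac{\mu_1\lambda_1}{1-c_{k-1}}e_k=c_ke_k.
\]
This closes the induction: $\mathcal X_{k+1}|_{U_k^-}=c_k\mathbf I$ for $1\le k\le n-2$.

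The only delicate points are the bookkeeping of positions (I take the block to start at the first particle, $j=1$; a general $j$ only shifts indices) and making the locality argument precise. As a consistency check, this matches the value $\mu_1\lambda_1=c_1$ on $\operatorname{span}\{e_0,e_1\}$ found above for $\mathcal X_2$.
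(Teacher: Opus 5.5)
Your proof is correct and follows the paper's argument: the same induction, split into the cases $r\le k-1$ (all three local factors are diagonal, giving $\mu_1\lambda_1/a_{k-1,1}=c_k$) and $r=k$ (the nilpotent middle block gives $\mathfrak A_k^{-1}e_{k+1}=e_{k+1}+\gamma e_k$). Your locality computation $\gamma=\lambda_1/(1-c_{k-1})=\lambda_1/a_{k-1,1}$ usefully justifies a value the paper only asserts by citing Lemma~\ref{lem:block-form-Xk}. One correction: $\mathcal X_k$ acts on the first $k+2$ tensor factors, not $k+1$, because $\mathcal B_{k+1}$ touches positions $k+1,k+2$, so your locality criterion should require $1$'s in positions $1,\dots,k+2$ (as stated it fails, e.g.\ $\mathcal X_1e_2=\lambda_1e_1$). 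This does not affect your argument, since $e_{k+1}$ carries $1$'s in positions $1,\dots,k+1$, which is exactly the true support of $\mathcal X_{k-1}$.
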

\begin{proof}
The proof is given in Appendix~\ref{412129am}.
\end{proof}

\begin{proposition}\label{prop:sector-invertibility-mixed}
For each \(1\le k\le n-2\), the operator \(\mathfrak A_k\) restricted to \(V_{[2,1,\dots,1]}\) is invertible.
\end{proposition}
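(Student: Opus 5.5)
The argument is an induction on $k$ that shows $\rho(\mathcal X_k)<1$ on $V_{[2,1,\dots,1]}$. The base cases $k=1,2$ were already handled above. For the inductive step, assume $\mathfrak A_1,\dots,\mathfrak A_{k-1}$ are invertible on $V_{[2,1,\dots,1]}$, so that $\mathcal X_k=\mathcal B_{k+1}\mathfrak A_{k-1}^{-1}\mathcal B_k'$ is well defined there. By Lemma~\ref{lem:block-form-Xk}, with respect to the ordered basis $(e_0,\dots,e_{k-1}\mid e_k,e_{k+1}\mid e_{k+2},\dots,e_{n-1})$, the matrix $\mathcal X_k$ is block diagonal. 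Its blocks are the diagonal matrices $D_k^-$ and $D_{n-k-2}^+$, together with a nilpotent $2\times2$ middle block. Hence
\[
\operatorname{Spec}(\mathcal X_k)\subset\{0\}\cup\operatorname{diag}(D_k^-)\cup\operatorname{diag}(D_{n-k-2}^+),
\]
and it suffices to show that every diagonal entry of $D_k^-$ and $D_{n-k-2}^+$ is strictly less than $1$. Indeed, the block-triangular structure gives $\det\mathfrak A_k=\prod(1-d)$ over these diagonal entries $d$.

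Next I identify the diagonal entries with the scalars from case (a).

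\emph{The block $D_k^-$.} Here I apply Lemma~\ref{1203ama} with $k-1$ in place of $k$, which requires $k\ge2$. It gives $\mathcal X_k|_{U_{k-1}^-}=c_{k-1}\mathbf I$, so every entry of $D_k^-$ equals $c_{k-1}$. This $c_{k-1}$ is the eigenvalue of $\mathcal X_{k-1}$ on $|1\cdots1\rangle$. Since $\mathcal X_{k-1}$ acts on the one-dimensional sector $V_{[1,\dots,1]}$ exactly as in case (a), we have $c_{k-1}=\mu_1\lambda_1/a_{k-2,1}=1-a_{k-1,1}$. Case (a) gives $a_{k-1,1}\ge r_1\ge 1/2$, hence $c_{k-1}\le 1/2<1$. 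For $k=1$, $D_1^-$ is zero by the explicit computation of $\mathcal X_1$.

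\emph{The block $D_{n-k-2}^+$.} The vectors $e_r$ with $r\ge k+2$ have the species-$2$ particle outside the block of positions $1,\dots,k+1$ on which $\mathcal B_1,\dots,\mathcal B_{k+1}$ and $\mathcal B_1',\dots,\mathcal B_k'$ act. On such vectors these operators act as on the single-species word $1^{k+1}$, tensored with the identity on the remaining factors. Therefore $\mathfrak A_{k-1}^{-1}$ and $\mathcal X_k$ act on $e_r$ by the same scalar as on $|1\cdots1\rangle$, namely $c_k=1-a_{k,1}\le 1/2$. I will confirm this with a short locality argument: all operators in the recursion \eqref{1122am331} up to index $k$ are supported on the first $k+1$ tensor factors.

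The main obstacle I expect is the identification of $D_k^-$, namely making sure the index shift in Lemma~\ref{1203ama} matches the recursion \eqref{1231am}, and checking that $\mathcal X_k$ really is $\mathcal B_{k+1}\mathfrak A_{k-1}^{-1}\mathcal B_k'$ restricted to that sector, so that $c_{k-1}=\mu_1\lambda_1/a_{k-2,1}$. Once this is settled, all eigenvalues of $\mathcal X_k$ lie in $[0,1/2]$. Hence $\rho(\mathcal X_k)<1$, $\mathfrak A_k=\mathbf I-\mathcal X_k$ is invertible on $V_{[2,1,\dots,1]}$, and the induction closes. By the species relabelling noted earlier, the same holds for $V_{[i,j,\dots,j]}$.
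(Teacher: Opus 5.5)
Your proposal is correct and follows the paper's proof essentially step for step: the block form of Lemma~\ref{lem:block-form-Xk}, the left block identified as $c_{k-1}\mathbf I=(1-a_{k-1,1})\mathbf I$ via Lemma~\ref{1203ama}, the right block reduced by locality to the single-species scalar, and the unipotent middle block; your explicit induction (which guarantees that $\mathfrak A_{k-1}^{-1}$ exists), the separate check $D_1^-=0$, and the exact identification $D^+_{n-k-2}=c_k\mathbf I$ make precise what the paper leaves implicit. One harmless slip: $\mathcal B_{k+1}$ acts on sites $k+1,k+2$, so $\mathcal X_k$ is supported on the first $k+2$ factors (the single-species word is $1^{k+2}$, not $1^{k+1}$), but this changes nothing because for $r\ge k+2$ the species-$2$ particle of $e_r$ sits at site $r+1\ge k+3$.
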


\begin{proof}
By Lemma~\ref{lem:block-form-Xk}, with respect to the ordered basis
\[
(e_0,\dots,e_{k-1}\mid e_k,e_{k+1}\mid e_{k+2},\dots,e_{n-1}),
\]
the operator \(\mathfrak A_k=\mathbf I-\mathcal X_k\) has the block form
\[
\mathfrak A_k
=
\begin{pmatrix}
\mathbf I - D_k^- & 0 & 0 & 0\\
0 & 1 & * & 0\\
0 & 0 & 1 & 0\\
0 & 0 & 0 & \mathbf I - D_{n-k-2}^+
\end{pmatrix}.
\]
Therefore, it suffices to show that all diagonal entries of \(D_k^-\) and \(D_{n-k-2}^+\) lie in \([0,1)\).

\medskip
\noindent
\textit{Left block.}
By Lemma~\ref{1203ama}, on
\[
U_{k-1}^-=\operatorname{span}\{e_0,\dots,e_{k-1}\},
\]
we have
\[
\mathcal X_k\big|_{U_{k-1}^-}=c_{k-1}\mathbf I,
\qquad
c_{k-1}=\frac{\alpha}{a_{k-2,1}},
\qquad
\alpha:=\mu_1\lambda_1.
\]
Moreover,
\[
a_{k-1,1}=1-\frac{\alpha}{a_{k-2,1}}=1-c_{k-1}.
\]
Since part (a) shows that \(a_{k-1,1}>0\), it follows that
\[
0\le c_{k-1}<1.
\]
Hence all diagonal entries of \(D_k^-\) lie in \([0,1)\).

\medskip
\noindent
\textit{Right block.}
Consider the subspace
\[
U_k^+=\operatorname{span}\{e_{k+2},\dots,e_{n-1}\}.
\]
For vectors in \(U_k^+\), the particle of species \(2\) lies strictly to the right of the sites on which \(\mathbf{\mathcal B}_k'\) and \(\mathbf{\mathcal B}_{k+1}\) act. Therefore, on \(U_k^+\), the operator \(\mathcal X_k\) coincides with the corresponding one-species operator on the remaining coordinates. In particular, its eigenvalues are of the form
\[
\frac{\alpha}{a_{m,1}}
\]
for suitable \(m\ge0\). Since part (a) implies \(a_{j,1}>0\) for all $j$ and
\[
\frac{\alpha}{a_{m,1}}=1-a_{m+1,1}<1,
\]
all diagonal entries of \(D_{n-k-2}^+\) also lie in \([0,1)\).

\medskip
\noindent
\textit{Middle block.}
The middle \(2\times2\) block of \(\mathfrak A_k\) is upper triangular with diagonal entries equal to \(1\), and is therefore invertible.

\medskip
\noindent
Thus each diagonal block of \(\mathfrak A_k\) is invertible, and hence \(\mathfrak A_k\) is invertible on \(V_{[2,1,\dots,1]}\).
\end{proof}
\item[(c)] (Invertibility of \(\mathfrak A_k\) on \(V_{[\nu_1,\dots,\nu_n]}\) with all \(\nu_i\) distinct.)
This class of multisets was already treated in \cite{Lee2026}, where the invertibility of the corresponding operators was established. Since no same-species interactions occur in this setting, the argument of \cite{Lee2026} applies without modification. Therefore, \(\mathfrak A_k\) is invertible on \(V_{[\nu_1,\dots,\nu_n]}\).
\end{itemize}
\subsubsection{Non-admissible configurations arising from the operators $\mathcal{T}_i^+$}
The treatment of the operators $\mathcal{T}_i^+$ is closely parallel to that of $\mathcal{T}_i^-$,
but involves the operator
\[
\mathbf{Y} := (\mathbf{B}' \otimes \mathbf{I})(\mathbf{I} \otimes \mathbf{B}),
\]
instead of $\mathbf{X} = (\mathbf{I} \otimes \mathbf{B})(\mathbf{B}' \otimes \mathbf{I})$, introduced in Section~\ref{302am1}.

In Sections~\ref{302am1} and~\ref{302am2}, we showed how the non-admissible configurations arising from $\mathcal{T}_i^+$ are eliminated via the operator $\mathbf{Y}$ in the case $n=3$. The same elimination procedure extends to the general $n$ case, with the necessary modifications of the arguments from the previous sections (for $\mathcal{T}_i^-$). Therefore, We omit the details, as they follow by a straightforward adaptation of the arguments for $\mathcal{T}_i^-$.
\subsection{Effective swap rates for consecutive particles}\label{342pm412}
In  the Introduction, the dynamics of the model were defined through two-particle interactions. The full evolution is determined by these local rules together with the boundary reduction procedure.

The goal of this subsection is to describe the effective swap rates for configurations in which particles occupy consecutive sites. The key observation is that when a particle traverses a block of consecutive particles, the resulting swap rate depends only on the number of particles of the \emph{same species} encountered along the path. Equivalently, the effective rate coincides with that of a corresponding single-species system. This phenomenon already appeared in the three-particle analysis in Section~\ref{dynamics3}.

To make this precise, consider the transition
\begin{equation}\label{133am}
(x-1,x,\dots,x+n-2;\nu_1\cdots \nu_n)
\;\longrightarrow\;
(x,x+1,\dots,x+n-1;\nu_2\cdots \nu_n\nu_1),
\end{equation}
under the assumption that
\[
\nu_1 \le \nu_k \qquad \text{for all } k \ge 2.
\]
That is, the particle of species \(\nu_1\) at site \(x-1\) moves across the entire block and reaches the site \(x+n-1\), initially occupied by a hole (viewed as species \(0\)).

During this process, the particle interacts successively with particles to its right. These interactions can be described via intermediate hidden states arising from the two-particle dynamics (see \cite{Lee2026}). When two particles have different species, the local swap occurs deterministically, contributing no nontrivial factor to the rate. Consequently, only interactions with particles of the same species affect the effective rate.

This is illustrated in Figure~\ref{fig2}. In each of the three cases, the moving particle (of species $2$) encounters three particles of the same species, although the surrounding particles differ. Since interactions with other species do not affect the rate, all three configurations yield the same effective swap rate.

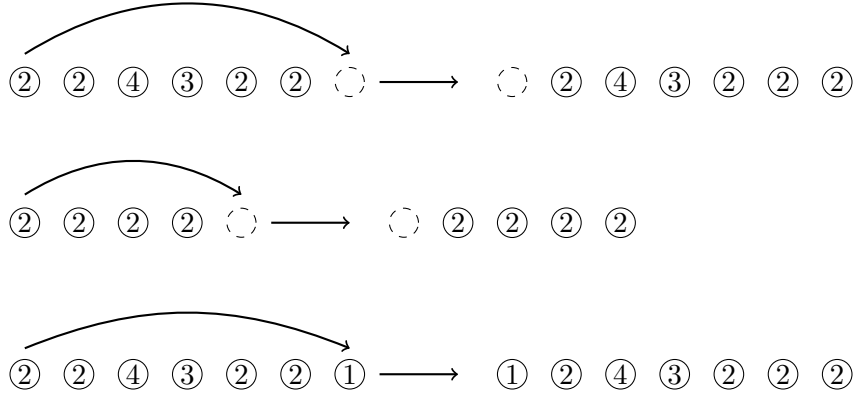
\begin{figure}[h]
\centering
\begin{tikzpicture}[scale=0.72]

\tikzset{
  circ/.style={draw, circle, inner sep=0.8pt, minimum size=11pt}
}


\node[circ] at (1,1.2) {2};
\node[circ] at (2,1.2) {2};
\node[circ] at (3,1.2) {4};
\node[circ] at (4,1.2) {3};
\node[circ] at (5,1.2) {2};
\node[circ] at (6,1.2) {2};
\node[circ,dashed] at (7,1.2) {};

\draw[->, thick] (1,1.72) to[out=32,in=148] (7,1.72);
\draw[->, thick] (7.55,1.2) -- (9,1.2);

\node[circ,dashed] at (10,1.2) {};
\node[circ] at (11,1.2) {2};
\node[circ] at (12,1.2) {4};
\node[circ] at (13,1.2) {3};
\node[circ] at (14,1.2) {2};
\node[circ] at (15,1.2) {2};
\node[circ] at (16,1.2) {2};


\node[circ] at (1,-1.4) {2};
\node[circ] at (2,-1.4) {2};
\node[circ] at (3,-1.4) {2};
\node[circ] at (4,-1.4) {2};
\node[circ,dashed] at (5,-1.4) {};

\draw[->, thick] (1,-0.88) to[out=32,in=148] (5,-0.88);
\draw[->, thick] (5.55,-1.4) -- (7,-1.4);

\node[circ,dashed] at (8,-1.4) {};
\node[circ] at (9,-1.4) {2};
\node[circ] at (10,-1.4) {2};
\node[circ] at (11,-1.4) {2};
\node[circ] at (12,-1.4) {2};


\node[circ] at (1,-4.2) {2};
\node[circ] at (2,-4.2) {2};
\node[circ] at (3,-4.2) {4};
\node[circ] at (4,-4.2) {3};
\node[circ] at (5,-4.2) {2};
\node[circ] at (6,-4.2) {2};
\node[circ] at (7,-4.2) {1};

\draw[->, thick] (1,-3.72) to[out=22,in=158] (7,-3.72);
\draw[->, thick] (7.55,-4.2) -- (9,-4.2);

\node[circ] at (10,-4.2) {1};
\node[circ] at (11,-4.2) {2};
\node[circ] at (12,-4.2) {4};
\node[circ] at (13,-4.2) {3};
\node[circ] at (14,-4.2) {2};
\node[circ] at (15,-4.2) {2};
\node[circ] at (16,-4.2) {2};

\end{tikzpicture}
\caption{The transition rates in all three cases are the same.}
\label{fig2}
\end{figure}
Let us determine the rate of such a transition in Figure~\ref{fig2}. Without loss of generality, consider
\begin{equation}\label{3452}
(x-1,x,\dots,x+n-2; i\cdots i)
\;\longrightarrow\;
(x,x+1,\dots,x+n-1; i\cdots i).
\end{equation}

By the recursion procedure in Section~\ref{444am}, the coefficient matrix of $\mathbf{U}(x-1,x,\dots,x+n-2;t)$
in the master equation for $\mathbf{U}(x,x+1,\dots,x+n-1;t)$
is
\begin{equation}\label{eq:coeff-same-species}
\mathfrak A_{n-2}^{-1}\mathcal B_{n-1}\mathfrak A_{n-3}^{-1}\mathcal B_{n-2}
\cdots
\mathfrak A_1^{-1}\mathcal B_2\mathcal B_1,
\end{equation}
obtained by solving \eqref{eq:block-recurrence} for $\mathbf W_{n-1,n-1}$ and extracting the coefficient
$\mathcal L_{n-1}$ in \eqref{eq:block-recurrence1} (see Lemma~\ref{513ama} in Appendix~\ref{514ama}). Hence
\[
p\bigl\langle i\cdots i \bigm|
\mathfrak A_{n-2}^{-1}\mathcal B_{n-1}\mathfrak A_{n-3}^{-1}\mathcal B_{n-2}\cdots
\mathfrak A_1^{-1}\mathcal B_2\mathcal B_1
\bigm| i\cdots i \bigr\rangle
\]
gives the rate of the transition \eqref{3452}.

\begin{proposition}
The rate of the transition in \eqref{3452} is
\begin{equation}\label{eq:AKK-recovered}
r_{n-1}^{(i)}
=
p\,\frac{\mu_i^{\,n-1}}{\mu_i^{\,n-1}+\mu_i^{\,n-2}\lambda_i+\cdots+\lambda_i^{\,n-1}}=
\frac{p}{1+\lambda_i/\mu_i+\cdots+(\lambda_i/\mu_i)^{n-1}}.
\end{equation}
\end{proposition}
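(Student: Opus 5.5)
The plan is to work entirely inside the one-dimensional invariant subspace $V_{[i,\dots,i]}$ spanned by $|i\cdots i\rangle$. By part (a) of the invertibility discussion in Section~\ref{340pm412}, every operator appearing in \eqref{eq:coeff-same-species} preserves this line and acts on it as a scalar. From \eqref{138am42}, $\mathbf B|ii\rangle=\mu_i|ii\rangle$ and $\mathbf B'|ii\rangle=\lambda_i|ii\rangle$. Hence each $\mathcal B_r$ acts on $|i\cdots i\rangle$ by $\mu_i$, each $\mathcal B_r'$ by $\lambda_i$, and $\mathfrak A_k$ by the scalar $a_{k,i}$ from \eqref{1231am}. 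Part (a) already guarantees $a_{k,i}\ge r_i>0$, so all inverses exist. Consequently the matrix element in \eqref{eq:coeff-same-species} collapses to
\[
p\,\frac{\mu_i^{\,n-1}}{\prod_{k=1}^{n-2} a_{k,i}},
\]
with one factor $\mu_i$ for each of $\mathcal B_1,\dots,\mathcal B_{n-1}$ and one factor $a_{k,i}^{-1}$ for each of $\mathfrak A_1^{-1},\dots,\mathfrak A_{n-2}^{-1}$.

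The key step is to solve the recursion $a_{k,i}=1-\mu_i\lambda_i/a_{k-1,i}$ in closed form. Set
\[
S_m:=\sum_{j=0}^{m}\mu_i^{\,m-j}\lambda_i^{\,j},
\]
so that $S_0=1$ and $S_1=\mu_i+\lambda_i=1$. The claim is that $a_{k,i}=S_{k+1}/S_k$ for all $k\ge0$. The base case is $a_{0,i}=1=S_1/S_0$. For the inductive step, use the elementary identity $(\mu_i+\lambda_i)S_k=S_{k+1}+\mu_i\lambda_i S_{k-1}$, which follows from $S_{k+1}=\mu_i^{k+1}+\lambda_i S_k$ and $S_k=\mu_i^k+\lambda_i S_{k-1}$. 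Since $\mu_i+\lambda_i=1$, this gives the three-term relation
\[
S_{k+1}=S_k-\mu_i\lambda_i S_{k-1}.
\]
Then
\[
a_{k,i}=1-\mu_i\lambda_i\,\frac{S_{k-1}}{S_k}=\frac{S_k-\mu_i\lambda_i S_{k-1}}{S_k}=\frac{S_{k+1}}{S_k}.
\]
Note that $S_m>0$ for all $m$, because $\mu_i$ and $\lambda_i$ are nonnegative and not both zero.

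The product then telescopes: $\prod_{k=1}^{n-2}a_{k,i}=S_{n-1}/S_1=S_{n-1}$. This yields $r^{(i)}_{n-1}=p\,\mu_i^{n-1}/S_{n-1}$, which is the first expression in \eqref{eq:AKK-recovered}. Dividing numerator and denominator by $\mu_i^{n-1}$ gives the second expression when $\mu_i>0$. When $\mu_i=0$ the rate is $0$, consistent with reading the second form as a limit.

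The only genuinely nontrivial point is identifying \eqref{eq:coeff-same-species} as the correct coefficient. Since the statement assumes it (via Lemma~\ref{513ama}), the main obstacle reduces to guessing the closed form $a_{k,i}=S_{k+1}/S_k$. This guess is suggested by the three-particle value $a_{1,i}=1-\mu_i\lambda_i=\mu_i^2+\mu_i\lambda_i+\lambda_i^2$ and by the rate $p\mu_i^2/(1-\mu_i\lambda_i)$ already computed in Section~\ref{dynamics3}.
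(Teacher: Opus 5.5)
Your proposal is correct and follows the paper's proof essentially step for step. You restrict to the one-dimensional sector to get $\mu_i^{n-1}/(a_{1,i}\cdots a_{n-2,i})$, write $a_{k,i}=S_{k+1}/S_k$ with $S_{k+1}=S_k-\mu_i\lambda_i S_{k-1}$, and telescope the product to $S_{n-1}$. Your direct inductive check of the sum form of $S_k$, in place of the paper's solution via the characteristic roots $\mu_i,\lambda_i$, is a small improvement: it also covers the repeated-root case $\mu_i=\lambda_i=\tfrac12$, where the paper's intermediate expression $(\mu_i^{k+1}-\lambda_i^{k+1})/(\mu_i-\lambda_i)$ is undefined.
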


\begin{proof}
On the sector $|i\cdots i\rangle$, each $\mathcal B_j$ acts by $\mu_i$ and each $\mathcal B_j'$ acts by $\lambda_i$. Hence
\[
\bigl\langle i\cdots i \bigm|
\mathfrak A_{n-2}^{-1}\mathcal B_{n-1}\cdots \mathfrak A_1^{-1}\mathcal B_2\mathcal B_1
\bigm| i\cdots i \bigr\rangle
=
\frac{\mu_i^{\,n-1}}{a_{1,i}\cdots a_{n-2,i}},
\]
where
\[
a_{0,i}=1,
\qquad
a_{k,i}=1-\frac{\mu_i\lambda_i}{a_{k-1,i}}.
\]
To evaluate the product, we introduce the ratio ansatz
\[
a_{k,i}=\frac{S_{k+1}^{(i)}}{S_k^{(i)}}.
\]
Substituting into the recursion yields the linear recurrence
\[
S_{k+1}^{(i)}=S_k^{(i)}-\mu_i\lambda_i S_{k-1}^{(i)}.
\]
This recurrence has characteristic equation $r^2-r+\mu_i\lambda_i=0$, with roots $\mu_i$ and $\lambda_i$. Hence
\[
S_k^{(i)}=C_1 \mu_i^{\,k}+C_2 \lambda_i^{\,k}.
\]
Imposing $S_0^{(i)}=1$ and $S_1^{(i)}=\mu_i+\lambda_i=1$ determines the constants and gives
\[
S_k^{(i)}=\frac{\mu_i^{k+1}-\lambda_i^{k+1}}{\mu_i-\lambda_i}=\sum_{r=0}^k \mu^{k-r}\lambda^r.
\]
Since $S_1^{(i)}=\mu_i+\lambda_i=1$, the product telescopes:
\[
a_{1,i}\cdots a_{n-2,i}
=
\frac{S_{n-1}^{(i)}}{S_1^{(i)}}
=
S_{n-1}^{(i)}.
\]
Therefore
\[
r_{n-1}^{(i)}
=
p\,\frac{\mu_i^{\,n-1}}{S_{n-1}^{(i)}}
=
p\,\frac{\mu_i^{\,n-1}}{\mu_i^{\,n-1}+\mu_i^{\,n-2}\lambda_i+\cdots+\lambda_i^{\,n-1}}.
\]
\end{proof}
\begin{remark}
Our result \eqref{eq:AKK-recovered} provides an alternative derivation of equation (35) in \cite{Ali}.
\end{remark}
\subsection{Yang--Baxter integrability}\label{343pm412}
We have established two-particle reducibility for species-dependent binary parameters $\mu_i \in \{0,1\}$ for arbitrary species compositions, and for continuous parameters $\mu_i \in (0,1)$ in several nontrivial cases.

The next step in establishing integrability is to verify that the associated scattering matrix satisfies the Yang--Baxter equation. The $(i,j)$ sector with $i<j$ of the scattering matrix for the present model, obtained following the construction in \cite{Lee-2020,Lee2026}, is given by
\begin{equation}\label{Rmatirx}
\mathbf{R}_{\beta\alpha}:=\kbordermatrix{
      & ii & ij & ji & jj \\
ii & -\dfrac{(\mu_i + \lambda_i \xi_{\alpha}\xi_{\beta}-\xi_{\alpha})\xi_{\beta}}
          {(\mu_i + \lambda_i \xi_{\alpha}\xi_{\beta}-\xi_{\beta})\xi_{\alpha}}
    & 0 & 0 & 0 \\[6pt]
ij & 0 & 0 & \xi_{\beta} & 0 \\[6pt]
ji & 0 & \dfrac{1}{\xi_{\alpha}} & 0 & 0 \\[6pt]
jj & 0 & 0 & 0 &
    -\dfrac{(\mu_j + \lambda_j \xi_{\alpha}\xi_{\beta}-\xi_{\alpha})\xi_{\beta}}
          {(\mu_j + \lambda_j \xi_{\alpha}\xi_{\beta}-\xi_{\beta})\xi_{\alpha}}
}.
\end{equation}

Since the Yang--Baxter equation acts nontrivially only on three-particle sectors, it suffices to consider a three-particle system with at most three species. Accordingly, we extend \eqref{Rmatirx} to a $9\times 9$ matrix whose rows and columns are indexed by $11,12,\dots,33$. Even when the parameters $\mu_1,\mu_2,\mu_3$ are distinct, the Yang--Baxter equation remains satisfied for all $\mu_i \in [0,1]$.

\begin{theorem}
Let $\mathbf{R}_{\beta\alpha}$ be the $9\times 9$ scattering matrix defined by \eqref{Rmatirx}. Then, for distinct spectral parameters $\xi_{\alpha},\xi_{\beta},\xi_{\gamma}$, the Yang--Baxter equation
\begin{equation}\label{505pm}
(\mathbf{R}_{\gamma\beta}\otimes \mathbf{I})
(\mathbf{I}\otimes \mathbf{R}_{\gamma\alpha})
(\mathbf{R}_{\beta\alpha}\otimes \mathbf{I})
=
(\mathbf{I}\otimes \mathbf{R}_{\beta\alpha})
(\mathbf{R}_{\gamma\alpha}\otimes \mathbf{I})
(\mathbf{I}\otimes \mathbf{R}_{\gamma\beta})
\end{equation}
holds, where $\mathbf{I}$ denotes the $3\times 3$ identity matrix.
\end{theorem}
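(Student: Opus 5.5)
Since every species label is preserved by $\mathbf{R}$ (it only permutes or rescales words), the $27\times27$ Yang--Baxter identity \eqref{505pm} splits into blocks indexed by the multiset of species of the three particles. I will verify the identity separately on each invariant subspace $V_M$, $M$ a multiset of size three drawn from $\{1,2,3\}$. Both sides are products of three operators, each acting on one pair of adjacent tensor positions. So it suffices to know, for each pair $(a,b)$ of species, how $\mathbf{R}$ acts on $|ab\rangle$. If $a<b$, it sends $|ab\rangle$ to $\xi_\beta^{-1}$ or $1/\xi_\alpha$ times $|ba\rangle$ (reading off \eqref{Rmatirx}: $\langle ji|\mathbf R|ij\rangle = 1/\xi_\alpha$ and $\langle ij|\mathbf R|ji\rangle = \xi_\beta$). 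If $a=b=i$, it acts by the scalar
\[
S^{(i)}_{\beta\alpha}:=-\frac{(\mu_i+\lambda_i\xi_\alpha\xi_\beta-\xi_\alpha)\xi_\beta}{(\mu_i+\lambda_i\xi_\alpha\xi_\beta-\xi_\beta)\xi_\alpha}.
\]
The key observation is that every off-diagonal entry of $\mathbf R$ is a pure swap with a monomial weight that does not depend on the species. The only species dependence sits in the scalars $S^{(i)}$.

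\emph{Case $M$ with three distinct species.} Every application of $\mathbf R$ is a transposition of adjacent letters with weight $\xi_\beta$ (for $ji\to ij$) or $1/\xi_\alpha$ (for $ij\to ji$), with the appropriate spectral indices. Both sides of \eqref{505pm} send $|abc\rangle$ to $|cba\rangle$, because both products realize the longest permutation by adjacent transpositions. It remains to compare the monomial weights. In each product every pair of particles is swapped exactly once, and on both sides the same spectral pair (e.g.\ $\gamma\beta$) acts on the same two particles. I will check this by tracking particle identities. The weights then agree pair by pair, so this case is just bookkeeping. It is the same computation as in \cite{Lee2026}, since the $\mu$'s do not enter.

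\emph{Case $M=[i,i,i]$.} Each factor acts by a scalar, and both sides equal $S^{(i)}_{\gamma\beta}S^{(i)}_{\gamma\alpha}S^{(i)}_{\beta\alpha}$, so the identity is trivial.

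\emph{Case $M=[i,i,j]$ or $[i,j,j]$ (two equal, one distinct).} This is the only case where the species-dependent diagonal entries interact with swaps, and it is the main obstacle. $V_M$ is three dimensional, e.g.\ spanned by $|iij\rangle,|iji\rangle,|jii\rangle$. The plan is to compute, for each basis vector, the image under both sides. Along each path, the two equal particles meet exactly once as an adjacent pair. That meeting contributes a factor $S^{(i)}_{\rho\sigma}$, where $\rho\sigma$ is the spectral pair carried by the two $i$-particles at the moment of the meeting. The distinct particle contributes swap monomials. The claim to verify is that on both sides the equal particles meet under the same spectral pair. If they do, each matrix entry is a product of the same factor $S^{(i)}$ and the same monomials, so the two sides agree with no rational-function identity needed.

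I expect this to hold because in a Yang--Baxter product with pure swaps the $k$-th factor always interchanges a fixed pair of particle labels. For instance, on the left side $\mathbf R_{\beta\alpha}$ acts on the particles carrying $\alpha,\beta$, then $\mathbf R_{\gamma\alpha}$, then $\mathbf R_{\gamma\beta}$. The only subtlety is that a diagonal (non-swapping) action on $|ii\rangle$ does not move the particles, which could change which labels sit where afterwards. Since the two equal particles are indistinguishable, I will handle this by showing that the resulting word is the same as if they had swapped. I will do the explicit $3\times3$ check for $|iij\rangle$ (and symmetric ones) to confirm that the surviving terms match. If a genuine sum of two terms appears in some entry, I would fall back to a direct symbolic verification of the resulting rational identity. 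Doing this for $[i,i,j]$ with $i<j$ and $i>j$ and for $[i,j,j]$, together with the two previous cases, establishes \eqref{505pm} for all $\mu_1,\mu_2,\mu_3\in[0,1]$.
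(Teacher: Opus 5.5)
Your proposal is correct. It shares the paper's first step: the paper also reduces \eqref{505pm} to the invariant blocks $[i,i,i]$, $[i,i,j]$, $[i,j,k]$ (citing Lemma~A.2 of \cite{Lee-2024}). The paper then only asserts that the identity is checked by direct computation on these blocks, or on the full $27\times 27$ matrices. You go further by explaining why that check is automatic, and your argument can be tightened so that the block decomposition becomes unnecessary.

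The point you flag as the ``only subtlety'' is settled in one line. The swap $P$ fixes $|ii\rangle$, so $\mathbf{R}_{\beta\alpha}|ab\rangle = w_{\beta\alpha}(a,b)\,|ba\rangle$ for \emph{all} $a,b$, where $w_{\beta\alpha}(a,b)$ equals $1/\xi_\alpha$ for $a<b$, $\xi_\beta$ for $a>b$, and $S^{(a)}_{\beta\alpha}$ for $a=b$. In other words, $\mathbf{R}_{\beta\alpha}=P\,D_{\beta\alpha}$ with $D_{\beta\alpha}$ diagonal. Tracking $|abc\rangle$ through either side gives $w_{\beta\alpha}(a,b)\,w_{\gamma\alpha}(a,c)\,w_{\gamma\beta}(b,c)\,|cba\rangle$, and each pair is met in its original left--right order on both sides. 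Each column of $\mathbf R$ has a single nonzero entry, so no sums arise and your fallback to a symbolic rational identity is never needed.

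Compared with the paper's computational check, this buys transparency and generality. The identity holds for arbitrary diagonal weights, so it does not depend on the particular form of $S^{(i)}$ or on any restriction on $\mu_i$. This explains structurally why species-dependent $\mu_i$ cause no obstruction to the Yang--Baxter equation.

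One minor slip: in your first paragraph, $|ab\rangle$ with $a<b$ is sent to $\xi_\alpha^{-1}|ba\rangle$, not ``$\xi_\beta^{-1}$ or $1/\xi_\alpha$''. Your later statement of the weights is correct.
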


\begin{proof}
This identity can be verified by  direct computation of the corresponding $27\times 27$ matrices. Alternatively, following Lemma~A.2 of \cite{Lee-2024}, one observes that, after a suitable reordering of rows and columns, the matrices become block diagonal, with each block acting on an invariant subspace spanned by permutations of a fixed multiset of species. Therefore, it suffices to verify \eqref{505pm} on the blocks corresponding to the multisets $[i,i,i]$, $[i,i,j]$ (with $i\neq j$), and $[i,j,k]$ with $i,j,k$ all distinct.
\end{proof}
\section{Discussion}\label{Section4}

In the infinite-volume setting, additional subtleties arise in defining the model when infinitely many particles have nontrivial interaction parameters (i.e., $\mu_i \neq 0$). In the binary regime $\mu_i \in \{0,1\}$, for which we established integrability, the model admits a well-defined infinite-volume formulation. For instance, one may consider configurations in which infinitely many particles of a fixed species $i_0$ with $\mu_{i_0}=0$ (so that they behave as in TASEP among themselves) occupy sites $x \le -x_0$, while finitely many particles of other species, with parameters $\mu_i \in \{0,1\}$, occupy the remaining sites. Such configurations may be viewed as finite perturbations of a homogeneous TASEP background with multispecies long-range swap dynamics.

A central direction for future research in this setting is the analysis of asymptotic properties of observables. In particular, it is natural to investigate particle currents and the behavior of tagged particles, and to examine whether KPZ-type fluctuation behavior emerges in this model.

Unlike existing multispecies models, which have been extensively studied, it remains unclear whether the present model admits a coupling method suitable for asymptotic analysis. This suggests that alternative approaches may be required. A promising direction is to adapt the approach of Tracy and Widom for ASEP~\cite{TracyWidom2009} to the present model. In particular, it would be of interest to derive explicit formulas for relevant observables and to carry out their asymptotic analysis.

\appendix
\section{Additional Lemmas}\label{514ama}
\begin{lemma}\label{100pm326}
Let
\[
\mathbf X := (\mathbf I\otimes \mathbf B)(\mathbf B'\otimes \mathbf I),~~\mathbf X_0 := (\mathbf{I} \otimes \mathbf{B})(\mathbf{B} \otimes \mathbf{I}).
\]
Then,
\begin{equation*}
\mathbf X\mathbf X_0 = \sum_{i=1}^N  \mu_i^3\lambda_i \mathbf{E}_i ~~\textrm{where} ~~\mathbf{E}_i = |iii\rangle\langle iii|.
\end{equation*}
\end{lemma}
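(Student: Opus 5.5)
The plan is a direct basis computation, organized so that it reuses the action of $\mathbf X$ already tabulated in Lemma~\ref{lem:X-structure}(i). I would first compute $\mathbf X_0|abc\rangle$ for an arbitrary basis vector, and then apply $\mathbf X$ to each resulting vector using that table. Since both sides are linear, it suffices to check the identity on every $|abc\rangle$. It is enough to show that $\mathbf X\mathbf X_0|abc\rangle$ vanishes unless $a=b=c$, and equals $\mu_a^3\lambda_a|aaa\rangle$ when $a=b=c$.

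\emph{Step 1: action of $\mathbf X_0$.} Apply $\mathbf B\otimes\mathbf I$ to the first two letters using \eqref{138am42}:
\[
|aac\rangle\mapsto\mu_a|aac\rangle,\qquad |abc\rangle\mapsto|bac\rangle\ (a<b),\qquad |abc\rangle\mapsto 0\ (a>b).
\]
Next apply $\mathbf I\otimes\mathbf B$ to letters two and three. This produces only the following nonzero outcomes:
\begin{itemize}
\item $\mu_a^2|aaa\rangle$ from $|aaa\rangle$;
\item $\mu_a|aca\rangle$ from $|aac\rangle$ with $a<c$;
\item $\mu_a|baa\rangle$ from $|aba\rangle$ with $a<b$;
\item $|bca\rangle$ from $|abc\rangle$ with $a<b$ and $a<c$.
\end{itemize}
Every other basis vector is annihilated.

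\emph{Step 2: apply $\mathbf X$ via Lemma~\ref{lem:X-structure}(i).} The lemma gives $\mathbf X|aaa\rangle=\mu_a\lambda_a|aaa\rangle$, which produces the claimed coefficient $\mu_a^2\cdot\mu_a\lambda_a=\mu_a^3\lambda_a$. For the three remaining outputs I will check that none falls into a nonzero case of the lemma.
\begin{itemize}
\item $|aca\rangle$ with $a<c$ has $\nu_1<\nu_2$, so no case of the lemma applies.
\item $|baa\rangle$ with $b>a$ has $\nu_1>\nu_2$. However $\nu_1\neq\nu_3$, and $\nu_3>\nu_1$ fails.
\item $|bca\rangle$ with $a<b$ and $a<c$ cannot have $\nu_1=\nu_2<\nu_3$, because that would require $b=c<a$. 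It cannot have $\nu_1=\nu_3$ either, since $b\neq a$. Finally, $\nu_3>\nu_1$ would need $a>b$, which is false.
\end{itemize}
Hence $\mathbf X$ kills all three outputs, and the identity follows.

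The only real obstacle is bookkeeping. The point to get right is Step 1: the first application of $\mathbf B\otimes\mathbf I$ leaves the first two letters equal or weakly decreasing. After the second factor acts, the first pair is either increasing or ends up in a configuration that $\mathbf B'$ and then $\mathbf B$ cannot both act on nontrivially. I would double-check each case against the explicit entries of $\mathbf B$ and $\mathbf B'$ rather than rely only on the summarized table.
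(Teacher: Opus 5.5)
Your proposal is correct and follows the same route as the paper: you tabulate $\mathbf X_0$ on basis vectors $|abc\rangle$, then apply $\mathbf X$ via Lemma~\ref{lem:X-structure}(i), and only $|aaa\rangle$ survives, with coefficient $\mu_a^2\cdot\mu_a\lambda_a$. Your case list for $\mathbf X_0$ is in fact more complete than the paper's table, which omits $|aba\rangle\mapsto\mu_a|baa\rangle$ ($a<b$) and the subcase $a<c<b$ of $|abc\rangle\mapsto|bca\rangle$, and misstates the image for $a<b=c$ (it is $|bba\rangle$ with coefficient $1$); your Step 2 correctly shows that $\mathbf X$ annihilates all of these.
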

\begin{proof}
We  compute the action of the operator on the standard basis vectors $|\nu_1\nu_2\nu_3\rangle $
of \(\mathbb C^{N^3}\). Since
\(\mathbf B\otimes \mathbf I\) acts on the first two coordinates, we have
\[
(\mathbf B\otimes \mathbf I)|\nu_1\nu_2\nu_3\rangle
=
\begin{cases}
\mu_{\nu_1}\,|\nu_1\nu_1\nu_3\rangle, & \text{if } \nu_1=\nu_2,\\[4pt]
|\nu_2\nu_1\nu_3\rangle, & \text{if } \nu_1<\nu_2,\\[4pt]
0, & \text{if } \nu_1>\nu_2.
\end{cases}
\]
Applying \(\mathbf I\otimes \mathbf B\), which acts on the last two coordinates, yields
\[
\mathbf X_0|\nu_1\nu_2\nu_3\rangle
=
\begin{cases}
\mu_{\nu_1}^2\,|\nu_1\nu_1\nu_1\rangle, & \text{if } \nu_1=\nu_2=\nu_3,\\[4pt]
\mu_{\nu_1}\,|\nu_1\nu_3\nu_1\rangle, & \text{if } \nu_1=\nu_2<\nu_3,\\[4pt]
\mu_{\nu_2}\,|\nu_2\nu_2\nu_3\rangle, & \text{if } \nu_1<\nu_2=\nu_3,\\[4pt]
|\nu_2\nu_3\nu_1\rangle, & \text{if } \nu_1<\nu_2<\nu_3,\\[4pt]
0, & \text{otherwise}.
\end{cases}
\]
We now apply $\mathbf X=(\mathbf I\otimes \mathbf B)(\mathbf B'\otimes \mathbf I)$
to each of the above cases.
\begin{itemize}
\item[$\bullet$] (Case 1: \(\nu_1=\nu_2=\nu_3\).)
Then
\[
\mathbf X_0|\nu_1\nu_1\nu_1\rangle
=
\mu_{\nu_1}^2|\nu_1\nu_1\nu_1\rangle.
\]
By part (i) of the previous lemma,
\[
\mathbf X|\nu_1\nu_1\nu_1\rangle
=
\lambda_{\nu_1}\mu_{\nu_1}\,|\nu_1\nu_1\nu_1\rangle.
\]
Hence
\[
\mathbf X\mathbf X_0|\nu_1\nu_1\nu_1\rangle
=
\mu_{\nu_1}^2\lambda_{\nu_1}\mu_{\nu_1}\,|\nu_1\nu_1\nu_1\rangle
=
\mu_{\nu_1}^3\lambda_{\nu_1}\,|\nu_1\nu_1\nu_1\rangle.
\]

\item[$\bullet$] (Case 2: \(\nu_1=\nu_2<\nu_3\).)
Then
\[
\mathbf X_0|\nu_1\nu_1\nu_3\rangle
=
\mu_{\nu_1}\,|\nu_1\nu_3\nu_1\rangle.
\]
Since the first two coordinates satisfy \(\nu_1<\nu_3\), part (i) of the previous lemma gives
\[
\mathbf X|\nu_1\nu_3\nu_1\rangle=0.
\]
Therefore, $\mathbf X\mathbf X_0|\nu_1\nu_1\nu_3\rangle=0.$
\item[$\bullet$] (Case 3: \(\nu_1<\nu_2=\nu_3\).)
Then
\[
\mathbf X_0|\nu_1\nu_2\nu_2\rangle
=
\mu_{\nu_2}\,|\nu_2\nu_2\nu_1\rangle.
\]
Applying part (i) of the previous lemma to \(|\nu_2\nu_2\nu_1\rangle\), we note that
\(\nu_2=\nu_2>\nu_1\), so this falls into the zero case. Hence
\[
\mathbf X|\nu_2\nu_2\nu_1\rangle=0,
\]
and therefore $\mathbf X\mathbf X_0|\nu_1\nu_2\nu_2\rangle=0.$
\item[$\bullet$] (Case 4: \(\nu_1<\nu_2<\nu_3\).)
Then
\[
\mathbf X_0|\nu_1\nu_2\nu_3\rangle
=
|\nu_2\nu_3\nu_1\rangle.
\]
Since \(\nu_2<\nu_3\), part (i) of the previous lemma again gives
\[
\mathbf X|\nu_2\nu_3\nu_1\rangle=0.
\]
Thus $\mathbf X\mathbf X_0|\nu_1\nu_2\nu_3\rangle=0.$
\item[$\bullet$] ({All remaining cases.})
In all other cases,
\[
\mathbf X_0|\nu_1\nu_2\nu_3\rangle=0,
\]
so trivially $\mathbf X\mathbf X_0|\nu_1\nu_2\nu_3\rangle=0.$
\end{itemize}
\end{proof}
\begin{lemma}\label{513ama}
Let $\mathbf{W}_{m,i}$ satisfy
\begin{equation}\label{final}
\mathbf{W}_{m,i}=\mathcal{B}_i\mathbf{W}_{m,i-1}+\mathcal{B}_i'\mathbf{W}_{m,i+1},\qquad i=1,\dots,m.
\end{equation}
Assume that $\mathfrak{A}_k$ defined by
\[
\mathfrak{A}_0=\mathbf{I},\qquad \mathfrak{A}_k=\mathbf{I}-\mathbf{B}_{k+1}\mathfrak{A}_{k-1}^{-1}\mathbf{B}_k'
\]
are invertible. Then
\[
\mathbf{W}_{m,m}=\mathcal{L}_m\mathbf{W}_{m,0}+\mathcal{L}_m'\mathbf{W}_{m,m+1},
\]
where
\[
\mathcal{L}_m=\mathfrak{A}_{m-1}^{-1}\mathcal{B}_m\mathfrak{A}_{m-2}^{-1}\mathcal{B}_{m-1}\cdots \mathfrak{A}_1^{-1}\mathcal{B}_2\mathcal{B}_1.
\]
\end{lemma}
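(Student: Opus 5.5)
We prove the formula by a forward elimination (Gaussian elimination for a block tridiagonal system), proving by induction on $k$ the stronger intermediate statement
\[
\mathbf{W}_{m,k}=\mathcal{L}_k\,\mathbf{W}_{m,0}+\mathfrak{A}_{k-1}^{-1}\mathcal{B}_k'\,\mathbf{W}_{m,k+1},\qquad k=1,\dots,m,
\]
where $\mathcal{L}_k:=\mathfrak{A}_{k-1}^{-1}\mathcal{B}_k\mathfrak{A}_{k-2}^{-1}\mathcal{B}_{k-1}\cdots\mathfrak{A}_1^{-1}\mathcal{B}_2\mathcal{B}_1$ (with $\mathcal{L}_1=\mathcal{B}_1$). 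Here the $\mathbf{B}_{k+1},\mathbf{B}_k'$ in the displayed recursion for $\mathfrak A_k$ are read as $\mathcal{B}_{k+1},\mathcal{B}_k'$, consistent with \eqref{1122am331}. The case $k=m$ is exactly the claim, with $\mathcal{L}_m'=\mathfrak{A}_{m-1}^{-1}\mathcal{B}_m'$.

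The base case $k=1$ is equation \eqref{final} with $i=1$, since $\mathfrak{A}_0=\mathbf I$. For the inductive step, assume the statement for $k-1$ with $2\le k\le m$. Substitute it into \eqref{final} with $i=k$:
\[
\mathbf{W}_{m,k}=\mathcal{B}_k\mathcal{L}_{k-1}\mathbf{W}_{m,0}+\mathcal{B}_k\mathfrak{A}_{k-2}^{-1}\mathcal{B}_{k-1}'\mathbf{W}_{m,k}+\mathcal{B}_k'\mathbf{W}_{m,k+1}.
\]
Moving the $\mathbf{W}_{m,k}$ term to the left-hand side gives $\bigl(\mathbf I-\mathcal{B}_k\mathfrak{A}_{k-2}^{-1}\mathcal{B}_{k-1}'\bigr)\mathbf{W}_{m,k}$. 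By \eqref{1122am331} this operator is exactly $\mathfrak{A}_{k-1}$. Applying the assumed invertibility of $\mathfrak{A}_{k-1}$ then gives
\[
\mathbf{W}_{m,k}=\mathfrak{A}_{k-1}^{-1}\mathcal{B}_k\mathcal{L}_{k-1}\mathbf{W}_{m,0}+\mathfrak{A}_{k-1}^{-1}\mathcal{B}_k'\mathbf{W}_{m,k+1}.
\]
Since $\mathfrak{A}_{k-1}^{-1}\mathcal{B}_k\mathcal{L}_{k-1}=\mathcal{L}_k$ by definition of $\mathcal{L}_k$, this completes the induction.

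The only point requiring care is the index bookkeeping: one must check that the operator multiplying $\mathbf{W}_{m,k}$ after substitution matches the recursion \eqref{1122am331} with the same indices (namely $\mathcal{B}_k$ paired with $\mathcal{B}_{k-1}'$). One must also note that for $k=m$ the right neighbour $\mathbf{W}_{m,m+1}$ is admissible, so the recursion terminates there. The identity requires no commutation of operators, only left-multiplication by $\mathfrak{A}_{k-1}^{-1}$. The other $\mathbf{W}_{m,i}$ can afterwards be recovered by back-substitution, but that is not needed for the stated formula.
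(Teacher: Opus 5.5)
Your argument is correct and is essentially the paper's: both substitute the expression for the preceding $\mathbf{W}$ into the last equation of \eqref{final}, move the resulting $\mathbf{W}_{m,m}$ term to the left, recognize $\mathfrak{A}_{m-1}$ via \eqref{1122am331}, and invert. The paper phrases this as an induction on the block size $m$ (base case $m=2$) and tacitly uses that the coefficient of the right neighbour is $\mathfrak{A}_{m-2}^{-1}\mathcal{B}'_{m-1}$, whereas your induction on $k$ for fixed $m$ records that coefficient explicitly in the hypothesis, which makes the step cleaner.
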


\begin{proof}
We prove by induction on $m$. The case $m=2$ was shown in (\ref{eq:redX-sol}).
Assume the statement holds for $m-1$. From
\begin{equation}
\mathbf{W}_{m,m}=\mathcal{B}_m\mathbf{W}_{m,m-1}+\mathcal{B}_m'\mathbf{W}_{m,m+1},
\end{equation}
by (\ref{final}), and using the induction hypothesis to express $\mathbf{W}_{m,m-1}$ in terms of $\mathbf{W}_{m,0}$ and $\mathbf{W}_{m,m+1}$, we obtain
\[
\mathbf{W}_{m,m}
=
\mathcal{B}_m\mathfrak{A}_{m-2}^{-1}\cdots \mathcal{B}_2\mathcal{B}_1\mathbf{W}_{m,0}
+
\mathcal{B}_m\mathfrak{A}_{m-2}^{-1}\mathcal{B}_{m-1}'\mathbf{W}_{m,m}
+
\mathcal{B}_m'\mathbf{W}_{m,m+1}.
\]
Rearranging yields
\[
(\mathbf{I}-\mathcal{B}_m\mathfrak{A}_{m-2}^{-1}\mathcal{B}_{m-1}')\mathbf{W}_{m,m}
=
\mathcal{B}_m\mathfrak{A}_{m-2}^{-1}\cdots \mathcal{B}_2\mathcal{B}_1\mathbf{W}_{m,0}
+
\mathcal{B}_m'\mathbf{W}_{m,m+1},
\]
which gives the result.
\end{proof}
\section{Proofs}\label{412129am}
\begin{proof}[Proof of Lemma \ref{lem:block-form-Xk}]
We prove the statement by induction on $k$. The case $k=1$ follows from the explicit computation given in Section \ref{412ama}. Assume the statement holds for $k-1$.

With respect to the ordered basis
\[
(e_0,\dots,e_{k-2}\mid e_{k-1},e_k\mid e_{k+1},\dots,e_{n-1}),
\]
the matrix of $\mathbf I - \mathcal X_{k-1}$ has the form
\[
\mathbf I-\mathcal X_{k-1}
=
\left(
\begin{array}{c|cc|c}
\mathbf I-D_{k-1}^- & 0 & 0 & 0\\
\hline
0 & 1 & * & 0\\
0 & 0 & 1 & 0\\
\hline
0 & 0 & 0 & \mathbf I-D_{n-k-1}^+
\end{array}
\right),
\]
where $D_{k-1}^-$ and $D_{n-k-1}^+$ are some diagonal matrices of dimension $k-1$ and $n-k-1$, respectively, and hence
\[
(\mathbf I-\mathcal X_{k-1})^{-1}
=
\left(
\begin{array}{c|cc|c}
\widetilde D_{k-1}^- & 0 & 0 & 0\\
\hline
0 & 1 & * & 0\\
0 & 0 & 1 & 0\\
\hline
0 & 0 & 0 & \widetilde D_{n-k-1}^+
\end{array}
\right),
\]
where $\widetilde D_{k-1}^-$ and $\widetilde D_{n-k-1}^+$ are diagonal matrices of corresponding dimensions.

In particular, $(\mathbf I-\mathcal X_{k-1})^{-1}$ preserves the subspaces
\[
\operatorname{span}\{e_0,\dots,e_{k-2}\},\qquad
\operatorname{span}\{e_{k-1},e_k\},\qquad
\operatorname{span}\{e_{k+1},\dots,e_{n-1}\},
\]
and acts diagonally on the first and third. Now recall that
\[
\mathcal X_k
=
\mathbf{\mathcal B}_{k+1}(\mathbf I-\mathcal X_{k-1})^{-1}\mathbf{\mathcal B}_k'.
\]
We analyze the action of $\mathcal X_k$ on the three subspaces
\[
U_k^-:=\operatorname{span}\{e_0,\dots,e_{k-1}\},\qquad
U_k^0:=\operatorname{span}\{e_k,e_{k+1}\},\qquad
U_k^+:=\operatorname{span}\{e_{k+2},\dots,e_{n-1}\}.
\]

\begin{itemize}
\item[(i)](Action on $U_k^-$)
Let $0\le r\le k-1$. By the local action of $\mathbf{\mathcal B}_k'$, we have
\[
\mathbf{\mathcal B}_k' e_r = a_r e_r
\]
for some scalar $a_r$. Since $U_k^-$ is invariant under $(\mathbf I-\mathcal X_{k-1})^{-1}$, it follows that
\[
(\mathbf I-\mathcal X_{k-1})^{-1}\mathbf{\mathcal B}_k' e_r \in U_k^-.
\]
Moreover, $\mathbf{\mathcal B}_{k+1}$ acts diagonally on $U_k^-$, and hence
\[
\mathcal X_k e_r \in \operatorname{span}\{e_r\}.
\]
Thus, the restriction of $\mathcal X_k$ to $U_k^-$ is diagonal.
\item[(ii)] (Action on $U_k^+$)
Let $r\ge k+2$. Again,
\[
\mathbf{\mathcal B}_k' e_r = b_r e_r
\]
for some scalar $b_r$. Since $(\mathbf I-\mathcal X_{k-1})^{-1}$ acts diagonally on
\[
\operatorname{span}\{e_{k+1},\dots,e_{n-1}\},
\]
we obtain
\[
(\mathbf I-\mathcal X_{k-1})^{-1}\mathbf{\mathcal B}_k' e_r \in \operatorname{span}\{e_r\}.
\]
Applying $\mathbf{\mathcal B}_{k+1}$, which also acts diagonally on $U_k^+$, gives
\[
\mathcal X_k e_r \in \operatorname{span}\{e_r\}.
\]
Thus, the restriction of $\mathcal X_k$ to $U_k^+$ is diagonal.
\item[(iii)] (Action on $U_k^0$.) We consider $U_k^0=\operatorname{span}\{e_k,e_{k+1}\}$. First, $\mathbf{\mathcal B}_k' e_k = 0$, and hence
\[
\mathcal X_k e_k = 0.
\]
Next, write
\[
\mathbf{\mathcal B}_k' e_{k+1}=c_k e_{k+1}
\]
for some scalar $c_k$. Since $e_{k+1}$ lies in the right sector
\[
\operatorname{span}\{e_{k+1},\dots,e_{n-1}\},
\]
there exists a scalar $d_k$ such that
\[
(\mathbf I-\mathcal X_{k-1})^{-1}e_{k+1}=d_k e_{k+1}.
\]
Therefore,
\[
\mathcal X_k e_{k+1}
=
c_k d_k\, \mathbf{\mathcal B}_{k+1}e_{k+1}.
\]
By the local action of $\mathbf{\mathcal B}_{k+1}$, we have
\[
\mathbf{\mathcal B}_{k+1}e_{k+1} \in \operatorname{span}\{e_k\},
\]
and hence
\[
\mathcal X_k e_{k+1} \in \operatorname{span}\{e_k\}.
\]

Thus, with respect to the ordered basis $(e_k,e_{k+1})$, the restriction of $\mathcal X_k$ to $U_k^0$ has the form
\[
\begin{pmatrix}
0 & *\\
0 & 0
\end{pmatrix},
\]
and is therefore nilpotent.
\end{itemize}
\end{proof}

\begin{proof}[Proof of Lemma \ref{1203ama}]
Let \(\alpha:=\mu_1\lambda_1\). On the single-species sector,
\[
\mathcal X_k |1\cdots1\rangle
=
\frac{\alpha}{a_{k-1,1}} |1\cdots1\rangle,
\]
so
\[
c_k=\frac{\alpha}{a_{k-1,1}}.
\]

We argue by induction on \(k\). The case \(k=1\) follows from the explicit form of \(\mathcal X_2\).

Assume that
\[
\mathcal X_k e_r=c_{k-1}e_r
\qquad\text{for }0\le r\le k-1.
\]
Since
\[
c_{k-1}=\frac{\alpha}{a_{k-2,1}},
\]
it follows that
\[
\mathfrak A_k e_r
=
(I-\mathcal X_k)e_r
=
(1-c_{k-1})e_r
=
a_{k-1,1}e_r,
\]
where we used
\[
a_{k-1,1}=1-\frac{\alpha}{a_{k-2,1}}.
\]
Hence
\[
\mathfrak A_k^{-1}e_r=\frac1{a_{k-1,1}}e_r
\qquad (0\le r\le k-1).
\]

For \(0\le r\le k-1\), using
\[
\mathcal X_{k+1}=\mathcal B_{k+2}\mathfrak A_k^{-1}\mathcal B'_{k+1},
\]
together with
\[
\mathcal B'_{k+1}e_r=\lambda_1 e_r,
\qquad
\mathcal B_{k+2}e_r=\mu_1 e_r,
\]
we get
\[
\mathcal X_{k+1}e_r
=
\mathcal B_{k+2}\mathfrak A_k^{-1}(\lambda_1 e_r)
=
\frac{\lambda_1}{a_{k-1,1}}\mathcal B_{k+2}e_r
=
\frac{\mu_1\lambda_1}{a_{k-1,1}}e_r
=
c_ke_r.
\]

For \(r=k\), we use
\[
\mathcal B'_{k+1}e_k=e_{k+1}.
\]
By Lemma~\ref{lem:block-form-Xk},
\[
\mathfrak A_k^{-1}e_{k+1}
=
e_{k+1}+\frac{\lambda_1}{a_{k-1,1}}e_k.
\]
Therefore
\[
\mathcal X_{k+1}e_k
=
\mathcal B_{k+2}\mathfrak A_k^{-1}\mathcal B'_{k+1}e_k
=
\mathcal B_{k+2}\mathfrak A_k^{-1}e_{k+1}
=
\mathcal B_{k+2}\left(e_{k+1}+\frac{\lambda_1}{a_{k-1,1}}e_k\right).
\]
Since
\[
\mathcal B_{k+2}e_{k+1}=0,
\qquad
\mathcal B_{k+2}e_k=\mu_1 e_k,
\]
we obtain
\[
\mathcal X_{k+1}e_k
=
\frac{\lambda_1}{a_{k-1,1}}\mu_1 e_k
=
\frac{\mu_1\lambda_1}{a_{k-1,1}}e_k
=
c_ke_k.
\]

Thus
\[
\mathcal X_{k+1}e_r=c_ke_r
\qquad\text{for all }0\le r\le k,
\]
which completes the induction.
\end{proof}
\section*{Acknowledgments}
The author dedicates this paper to the memory of  his PhD advisor, Craig Tracy, whose guidance and insight had a lasting influence. The author is grateful to Axel Saenz Rodriguez at Oregon State University for valuable discussions during a visit in 2023. This work was supported by Nazarbayev University under the Faculty Development Competitive Research Grants Program for 2024--2026 (Grant No.~201223FD8822, EL).
\bibliographystyle{plain}
\bibliography{refs}
\end{document}